\DeclareSymbolFont{cyrillic}{T2A}{cmr}{m}{n}
\DeclareMathSymbol{\D}{\mathalpha}{cyrillic}{196}
\newlength{\dhatheight}
\theoremstyle{plain}
\newtheorem{theorem}{Theorem}[section]
\newtheorem{corollary}[theorem]{Corollary}
\theoremstyle{definition}
\newtheorem{definition}{Definition}[section]
\newtheorem{example}{Example}[section]
\newtheorem*{condition}{Condition}
\theoremstyle{remark}
\newtheorem{remark}[theorem]{Remark}
\def\namedlabel#1#2{\begingroup
   #2%
 \def\@currentlabel{#2}%
   \phantomsection\label{#1}\endgroup
}
\def\R{\ensuremath{\mathbb R}}
\def\N{\ensuremath{\mathbb N}}
\def\I{\ensuremath{{\bf 1}}}
\def\e{{\ensuremath{\rm e}}}
\def\p{\ensuremath{\mathbb P}}
\def\X{\mathcal{X}}
\def\1{{\bf 1}}
\def\dist{\ensuremath{\text{dist}}}
\def\ie{{\em i.e.}, }
\def\E{\mathbb E}
\def\dist{\ensuremath{\text{dist}}}
\def\shift{\mathcal{T}}
\numberwithin{equation}{section}
\def\moverlay{\mathpalette\mov@rlay}
\def\mov@rlay#1#2{\leavevmode\vtop{%
   \baselineskip\z@skip \lineskiplimit-\maxdimen
   \ialign{\hfil$\m@th#1##$\hfil\cr#2\crcr}}}
\newcommand{\charfusion}[3][\mathord]{
    #1{\ifx#1\mathop\vphantom{#2}\fi
        \mathpalette\mov@rlay{#2\cr#3}
      }
    \ifx#1\mathop\expandafter\displaylimits\fi}
\newcommand{\cupdot}{\charfusion[\mathbin]{\cup}{\cdot}}
\newcommand{\bigcupdot}{\charfusion[\mathop]{\bigcup}{\cdot}}
\begin{document}

\title[]{Dynamical counterexamples regarding the Extremal Index and the mean of the limiting cluster size distribution}

\author[M. Abadi]{Miguel Abadi}
\address{Miguel Abadi\\ Instituto de Matem\'atica e Estat\'istica\\Universidade de S. Paulo\\ Rua do Mat\~ao 1010\\ Cid. Universitaria\\ 05508090 - São Paulo\\ SP - Brasil} \email{leugim@ime.usp.br}
\urladdr{\url{http://miguelabadi.wixsite.com/miguel-abadi}}

\author[A. C. M. Freitas]{Ana Cristina Moreira Freitas}
\address{Ana Cristina Moreira Freitas\\ Centro de Matem\'{a}tica \&
Faculdade de Economia da Universidade do Porto\\ Rua Dr. Roberto Frias \\
4200-464 Porto\\ Portugal} \email{\href{mailto:amoreira@fep.up.pt}{amoreira@fep.up.pt}}
\urladdr{\url{http://www.fep.up.pt/docentes/amoreira/}}

\author[J. M. Freitas]{Jorge Milhazes Freitas}
\address{Jorge Milhazes Freitas\\ Centro de Matem\'{a}tica \& Faculdade de Ci\^encias da Universidade do Porto\\ Rua do
Campo Alegre 687\\ 4169-007 Porto\\ Portugal}
\email{\href{mailto:jmfreita@fc.up.pt}{jmfreita@fc.up.pt}}
\urladdr{\url{http://www.fc.up.pt/pessoas/jmfreita/}}

\thanks{All authors were partially supported by the joint project FAPESP (SP-Brazil) and FCT (Portugal) with reference FAPESP/19805/2014. ACMF and JMF were partially supported by FCT projects  PTDC/MAT-CAL/3884/2014 and PTDC/MAT-PUR/28177/2017, with national funds, and by CMUP (UID/MAT/00144/2013), which is funded by FCT with national (MCTES) and European structural funds through the programs FEDER, under the partnership agreement PT2020.}

\date{\today}

\keywords{} \subjclass[2010]{37A50, 60G70, 60G55, 37B20, 37A25}


\begin{abstract}
The Extremal Index is a parameter that measures the intensity of clustering of rare events and is usually equal to the reciprocal of the mean of the limiting cluster size distribution. We show how to build dynamically generated stochastic processes with an Extremal Index for which that equality does not hold. The mechanism used to build such counterexamples is based on  considering observable functions maximised at at least two points of the phase space, where one of them is an indifferent periodic point and another one is either a repelling periodic point or a non periodic point. The occurrence of extreme events is then tied to the entrance and recurrence to the vicinities of those points. This enables to mix the behaviour of an Extremal Index equal to $0$ with that of an Extremal Index larger than $0$. Using bi-dimensional point processes we explain how mass escapes in order to destroy the usual relation. We also perform a study about the formulae to compute the cluster size distribution introduced earlier and prove that ergodicity is enough to establish that the finite versions of the reciprocal of the Extremal Index and of the mean of the cluster size distribution do coincide.
\end{abstract}

\maketitle
\tableofcontents

\section{Introduction}

In Extreme Value Theory, the study of rare events is tied with the observation of abnormally high values among a series of realisations of a certain variable of interest, $X_0, X_1, \ldots$, \ie one is interested in events of the form $\{X_i>u\}$, corresponding to the exceedance of a high threshold $u$. The choice of the level $u$ is usually made according to the size $n$ of the available sample so that the average number of exceedances  is asymptotically a constant, $\tau\geq0$, as $n\to\infty$ (see \eqref{un}). A classical problem in this setting is the study of the asymptotic distribution of the partial maximum $M_n:=\max\{X_0, \ldots, X_{n-1}\}$.      

For dependent data, the exceedances may present a tendency to cluster, \ie to appear in groups rather than scattered along the time line. There is then an important parameter that quantifies the intensity of clustering of extreme events. This parameter, which we will denote by $\theta$, was called Extremal Index (EI) by Leadbetter in \cite{L83}, and is usually defined from the asymptotic distribution of $M_n$, namely, from the limit: $\lim_{n\to\infty}\p(M_n\leq u_n)=\e^{-\theta\tau}$. (See Definition~\ref{def:EI}). The EI takes values in $[0,1]$ and is such that $\theta=1$ means absence of clustering while $\theta$ close to $0$ means intensive clustering. 

In order to keep track of the occurrence of extreme events, one can consider point processes that count the number of exceedances on a normalised time frame. Although, we defer to Section~\ref{subsec:point-processes} its formal definition, we advance that, for a given size sample, $n$, and up to a given instant, $t$, the point processes give us the quantity:
$$
N_n(t)=\sum_{i=0}^{\lceil nt\rceil-1}\I_{\{X_i>u_n\}}.
$$
  In \cite{HHL88}, these point processes were proved to converge to a compound Poisson process where the Poisson events are charged by a multiplicity corresponding to the cluster size, \ie in particular, $N_n(t)$ was proved to converge in distribution to $N(t)=\sum_{i=1}^{N^*(t)}D_i$, where $N^*(t)$ is a Poisson random variable of mean $\theta\tau t$ independent of $D_1, D_2,\ldots$, which is an independent and identically distributed sequence of positive integer valued random variables. (See formal definition in Section~\ref{subsec:point-processes}).  Under some regularity conditions, the EI can be identified as the inverse of the mean cluster size, \ie $\theta^{-1}=\E(D_1)$ is the average of the multiplicity distribution of the limiting compound Poisson process. When $\theta=1$, the cluster size is $1$ a.s. ($D_1=1$ a.s.) and the limiting process is a simple Poisson process.

However, in \cite{S88} a counterexample was given where the EI does not coincide with the inverse of the mean cluster size of the limiting compound Poisson process, \ie $\theta^{-1}\neq\E(D_1)$. This example is based on a regenerative sequence with EI equal to $1/2$ but with a simple Poisson process limit, which means a mean cluster size equal to $1$. The regenerative property of the sequence is the key to prove the existence of an EI equal to $1/2$, which is guaranteed by \cite[Theorem~3.1]{R88}.

More recently, a theory of extreme values for dynamical systems has been developed (see \cite{LFFF16} and references therein). The idea is to consider stochastic processes arising from dynamical systems by evaluating a given observable along the orbits of that system. Namely, letting $T:\mathcal X\to\mathcal X$ be a discrete time dynamical system and $\varphi:\mathcal X\to \R$ a measurable function defined on the probability space $(\X,\mathcal B_{\mathcal X}, \mu)$, (where $\mu$ is $T$-invariant), then we define $X_0, X_1,,\ldots$ by $X_n=\varphi\circ T^n$, where $T^n$ denotes the $n$-fold composition of $T$ with itself. This observable $\varphi$ is typically maximised at a single point $\zeta$ chosen in the phase space $\mathcal X$ and then, as observed in \cite{FFT10}, the study of the occurrence of extreme events is related to problems of entrance and recurrence times. In \cite{FFT12}, the authors have shown that periodicity of $\zeta$ implies the appearance of clustering and, consequently, an EI less than 1, which is given by the rate of expansion of the system at the maximal point $\zeta$. Later, in \cite{FFT13}, the authors showed that at periodic points the Rare Events Point Process (REPP) converge to a compound Poisson process with a geometric multiplicity distribution of average $\theta^{-1}$, \ie $\p(D_1=j)=\theta(1-\theta)^{j-1}$. Moreover, for sufficiently regular systems, a full dichotomy exists (see \cite{K12,AFV15}), \ie either $\zeta$ is periodic and we have clustering or $\zeta$ is non-periodic and we have the absence of clustering with $\theta=1$ and a standard Poisson process as a limit for the REPP. In \cite{AFFR16}, the authors introduced a new device to create clustering: instead of considering observables maximised at a single point, they consider multiple maximising points and show that if these points are related by belonging to the same orbit then a fake periodic behaviour emerges, which is responsible for the appearance of clustering of extreme observations. In this case, the maximal points need not to be periodic but the maximal set that they form, \ie the set of points where the observable attains the global maximum of the observable $\varphi$, is periodic in the sense that it recurs to itself after a finite number of iterations. This approach yielded examples of different clustering patterns corresponding to different multiplicity distributions pertaining to the cluster size. However, in all such examples the EI coincides with inverse of the mean cluster size.

In this paper, we use the same mechanism to produce new counterexamples of stochastic processes with an EI that cannot be interpreted as the inverse of the mean cluster size of the corresponding limiting process. The idea is to consider an observable maximised at (at least) two points, where one of them is an indifferent periodic point while the other is either a non-periodic point or a repelling periodic point. We recall that when an observable is maximised at a single indifferent fixed point, we obtain an EI $\theta=0$ (see \cite{FFTV16}). Hence, we are mixing a degenerate behaviour corresponding to an EI equal to 0 with an EI strictly larger than 0 to obtain a stochastic process with an EI, which somehow corresponds to an average of these two types of behaviour, but whose finite time multiplicity distributions are not uniformly integrable and, therefore, the mean of the respective limit does not coincide with the inverse of the EI. To prove these statements we will use the formulas for EI given in \cite{FFT12}, the formulas for the multiplicity distributions given in \cite{FFT13, AFV15}, the dynamics of the Manneville-Pomeau map and also some tools from \cite{FFTV16}. 

We remark that, in the counterexample built by Smith in \cite{S88}, one can show that the regenerative process is also combining the behaviour of an EI equal to $0$ and an EI equal to $1$, which we defer to \cite{AFF18}. Moreover, in all counterexamples, the EI still coincides with the reciprocal of the limit of the means of the finite time cluster size distributions. Hence, the problem is that the limit of the mean finite time cluster size distribution does not coincide with the mean of the limiting cluster size distribution. This happens because there exists an escape of mass, which can be detected by looking at bi-dimensional point processes of rare events, which can be projected to obtain the one dimensional REPP mentioned earlier. In Section~\ref{sec:escape-of-mass} , we describe how the behaviour corresponding to an EI equal to $0$ is responsible for the escape of mass observed in the counterexamples, which ultimately explains why the usual interpretation for the EI fails in these situations.

Another highlight of this paper is the fact that we provide a nice interpretation of the formula to compute the cluster size distribution of the limiting process that was introduced in \cite{FFT13,AFV15} and relate it to the one used by Robert in \cite{R13}, for example. Moreover, we prove that ergodicity is sufficient to show that the EI still coincides with reciprocal of the limit of the mean finite time cluster size distribution.

\section{Extremal analysis of stationary stochastic processes}

In this section we let $X_0, X_1, \ldots$ denote a general stationary stochastic process, which we identify with the respective coordinate-variable process on $(\mathcal R^{\N_0}, \mathcal B^{\N_0}, \p)$, where $\mathcal R=\R^d$ and  $\mathcal B^{\N_0}$ is the $\sigma$-field generated by the coordinate functions $V_n:\mathcal R^{\N_0}\to\mathcal R$, with $V_n(x_0,x_1,\ldots)=x_n$, for $n\in\N_0$, so that there is a natural measurable map, the shift operator $\shift: \mathcal R^{\N_0}\to\mathcal R^{\N_0}$, given by $\shift(x_0,x_1,\ldots)=(x_1,x_2,\ldots)$, which when applied later in the dynamical systems context can be identified with $T$.

Observe that:
\[
V_{i-1}\circ \shift =V_{i}, \quad \mbox{for all $i\in\N$}.
\]
Since, we are assuming that the process is stationary, then $\p$ is $\shift$-invariant. Note that $V_i=V_0\circ \shift^i$, for all $i\in\N_0$, where $\shift^i$ denotes the $i$-fold composition of $\shift$, with the convention that $\shift^0$ denotes the identity map on $\mathcal R^{\N_0}$. 

In what follows, for every $A\in\mathcal B$, we denote the complement of $A$ as $A^c:=\mathcal X\setminus A$.

\subsection{Clustering of rare events}
\label{subsec:clustering}

Consider an extreme or rare event $A\in\mathcal B$ whose occurrence we want to study. For independent and identically distributed (iid) stochastic processes we expect the occurrences of $A$ to appear scattered along the time line. When the random variables are not independent then there may be a tendency for the observations of $A$ to appear concentrated in groups (clusters). This is sometimes referred as the law of series, see \cite{DL11}. Identifying the clusters becomes a problem because sometimes is not clear if a certain observation of $A$ is sufficiently close (in time) to others in order to be classified as belonging to the same cluster. 

There are two main methods to identify clusters. One is called the \emph{block declustering scheme} and the other one is the \emph{runs declustering scheme} (see \cite{SW94,F03a}). The block method splits the $n$ observations into $k_n$ (or $k_n+1$) blocks of length $\lfloor n/kn\rfloor$ and then establishes that any extreme events within the same block belong to the same cluster. The runs declustering scheme consists in setting a run length, $q$, and establishing that any rare events separated by at most $q-1$ non-extreme observations must belong to the same cluster, so that between two distinct clusters there must be a run of at least $q$ non-extreme observations.   

We are going to assume a runs declustering scheme and, therefore, we consider that there exists a fixed $q\in \N$ which will be the maximum waiting time between the  occurrence of two extreme events on the same cluster. In the applications, in order to prove the convergence of the point processes we will consider a condition on the dependence structure of the stochastic process called $\D'_q(u_n)^*$ (see Section~\ref{subsec:convergence-PP}), which when satisfied implies that the two declustering approaches behave essentially in the same way (see Remark~\ref{rem:D'-declustering-relation}).  

The choice of the run length $q$ is quite sensitive and we will return to the subject in Section~\ref{subsec:convergence-PP} when we introduce and discuss condition $\D'_q(u_n)^*$. However, we advance here the following interpretation for the value $q$. In \cite{FFT12}, we introduced the EI in the dynamical setting and established a relation between the appearance of clustering and the existence of  underlying periodic phenomena in the structure of the stochastic process. In fact, in the dynamical context, as observed, in \cite{FFT12} and in the subsequent papers \cite{AFFR16,AFFR17,FFRS19}, clustering is directly related with the periodicity of the maximal set $\mathcal M$, \ie the set of points where the observable $\varphi$ achieves the global maximum. Hence, $q$ can be interpreted as the largest of the periods of the underlying periodic phenomena present in the stochastic process. 

In order to illustrate the appearance of clustering and suitable choices for $q$ we give some simple examples of stationary stochastic processes, some of them arising from dynamical systems.

\begin{example}{A Maximum Moving Average process with period $2$.}
\label{exp:MMA-101}

Let $Y_{-2},Y_{-1},Y_0,Y_1,\ldots$ be a sequence of iid random variables with common continuous distribution function $G$. We define a Maximum Moving Average process $X_0,X_1,\ldots$ based on the previous sequence in the following way: for each $n\in\N_0$ set
\begin{equation*}
X_n=\max\{Y_{n-2},Y_n\}.
\end{equation*}
Without going into too much detail (for which we refer to  \cite[Appendix~A]{FFT12}), in this case it is clear that we have an underlying periodic phenomenon of period 2. To see this, observe that if $X_0>u$, for some large $u$, then there is a very good chance that $X_2>u$, as well. In fact, in this case, clusters will be separated by at least 2 observations below the reference threshold and the cluster size is 2 a.s. (see end of Section~\ref{subsec:convergence-PP}).

\end{example}

\begin{example}{A dynamically generated process based on a periodic repelling point.}
\label{exp:geometric}

Consider the discrete time dynamical system $T:[0,1]\to[0,1]$ given by $T(x)=2x\mod1$, equipped with Lebesgue measure on the borelean subsets of $[0,1]$ and let $\varphi:[0,1]\to\R\cup\{+\infty\}$ be such that $\varphi(x)=-\log|x-1/3|$. Define $X_0, X_1, \ldots$ by $X_n=\varphi\circ T^n$. This particular example was studied in \cite[Example~4.2.1]{LFFF16}. Observe that $\{X_0>u\}=(1/3-\e^{-u},1/3+\e^{-u})$ and $\zeta=1/3$ is a repelling periodic point of period 2, since $T(\zeta)=2/3$ and $T^2(\zeta)=1/3=\zeta$. This means that if  our orbit starts very close to $1/3$, we will observe a sequence of exceedances at even time steps until eventually the orbit leaves the set $(1/3-\e^{-u},1/3+\e^{-u})$ and then, typically, we have to wait a long time (because we are assuming that $u$ is large) until the systems brings it back again to that set in order to observe a new cluster of exceedances. In this case, it is also clear that one should take $q=2$.
\end{example}

\begin{example}{A dynamically generated process based on two periodic repelling points.}
\label{exp:mixture}

Consider the same discrete time dynamical system of the previous example and let $\varphi:[0,1]\to\R\cup\{+\infty\}$ be such that $\varphi(x)=-\log|x-1/3|\I_{[0,1/2]}-\log|x-5/7|\I_{(1/2,1]}$. Again, define $X_0, X_1, \ldots$ by $X_n=\varphi\circ T^n$. Observe that $\{X_0>u\}=(1/3-\e^{-u},1/3+\e^{-u})\cup (5/7-\e^{-u},5/7+\e^{-u})$ and $\zeta_1=1/3$ is a repelling periodic point of period 2, while $\zeta_2=5/7$ is a repelling periodic point of period 3. Hence, if we start with an exceedance, then either we observe a cluster of excedances observed at even time steps (if one starts very close to $\zeta_1$) or a cluster of exceedances observed at time steps which are multiples of 3 (if one starts very close to $\zeta_2$). In this case, in order to be sure that a cluster has ended, one has to observe a run of at least 3 non-exceedances, which means that $q=3$.
\end{example}

Having fixed a run length $q\in\N$, we define the sequence of nested sets $\left(U^{(\kappa)}(A)\right)_{\kappa\geq0}$ of $\mathcal B^{\N_0}$ given by:
\begin{align*}
U^{(0)}(A)&
=V_0^{-1}(A)\\ Q_q^{(0)}(A)&
=U^{(0)}(A)\cap \bigcap_{i=1}^q \shift^{-i}((U^{(0)}(A))^c),
\end{align*}
and for $\kappa\in\N$,
\begin{align}
U^{(\kappa)}(A)&=U^{(\kappa-1)}(A)\setminus Q_q^{(\kappa-1)}(A)\label{def:Uk}
\\
Q_q^{(\kappa)}(A)&:=U^{(\kappa)}(A)\cap\bigcap_{i=1}^q \shift^{-i}\left((U^{(\kappa)}(A))^c\right)\label{def:Qk}
\\
U^{(\infty)}(A)&=\bigcap_{\kappa\geq 0} U^{(\kappa)}(A).\label{def:Uinfty}
\end{align}

Note that $U^{(\kappa-1)}(A)$ corresponds to observing $A$ at time $0$ and then observing $A$ for at least $\kappa$ times so that the waiting time between two observations of $A$ is at most $q$. 
The event $Q_q^{(\kappa-1)}(A)=U^{(\kappa-1)}(A)\setminus U^{(\kappa)}(A)$ corresponds to observing $A$ exactly $\kappa$ times within no more than $q$ units of time between one and the next observation of $A$. This means, in particular, that the  $\kappa+1$-th observation of $A$ occurs at least $q+1$ iterations after the  $\kappa$-th observation of $A$. 
The event $U^{(\infty)}(A)$ corresponds to the occurrence of an observation of $A$, which is followed by an infinite number of observations of $A$ which are at most $q$ units of time apart from each other. To put it in a different way, if we define 
\begin{eqnarray}
h:&\mathcal R^{\N_0}&\rightarrow \{0,1\}^{\N_0}\\
&\underline x=(x_0, x_1,\ldots)&\mapsto  \underline \omega=\omega_0\omega_1\ldots \nonumber
\end{eqnarray} by setting for each $n\in\N_0$ that $\omega_n=V_n(h(\underline x))=1$ if $x_n\in A$ and  $\omega_n=V_n(h(\underline x))=0$ if $x_n\notin A$, then if $\underline x=(x_0,x_1,\ldots)\in U^{(\infty)}(A)$ then $h(\underline x)$ is a binary sequence, which starts with a 1 and has no block of more than $q-1$ consecutive 0's.
Let $J$ be an interval contained in $[0,\infty)$. We define
\begin{equation}
\label{eq:W-def}
\mathscr W_{J}(A):=\bigcap_{i\in J\cap \N_0}\shift^{-i}(V_0^{-1}(A^c)).
\end{equation}
Note that if $\underline x\in \mathscr W_{J}(A)$ means that $h(\underline x)$ has a block of consecutive 0's that correspond to the observations in $J\cap \N_0$.
We can now write a formula to determine the cluster sizer distribution of observations of $A$. We define the mass probability function $\pi_A$ supported on the positive integers by  
\begin{equation}
\label{def:pi-A}
\pi_A(\kappa)=\frac{\p(Q_{q}^{(\kappa-1)}(A))-\p(Q_{q}^{(\kappa)}(A))}{\p(Q_{q}^{(0)}(A))}, \quad \text{for each $\kappa\in\N$}.
\end{equation}
This formula for the finite time cluster size distribution was used first in \cite{FFT13} and explicitly written for the first time in \cite{AFV15}. It appeared subsequently in \cite{AFFR16,AFFR17}. This formula was derived during the proof of the convergence of REPP, which was based on a blocking type of argument. Although very useful it lacked a clear intuitive interpretation, which we mean to provide next.  

In order to establish the convergence of the REPP, we will describe a condition $\D'_q(u_n)^*$ inspired in condition $D'_p(u_n)^*$ from \cite{FFT13}, which is also very similar to the condition $D^{(k)}(u_n)$ introduced by Chernick et al. in \cite{CHM91}. This condition implies that the maximum waiting time before another observation of $A$ within the same cluster is $q$ units of time. Hence, if $(x_0,x_1,\ldots)\in\mathcal R^{\N_0}$ is a realisation of $X_0, X_1,\ldots$ then the beginning of cluster and the ending of cluster can be easily identified in $h(x_0,x_1,\ldots)$ by the appearance of a block of at least $q$ consecutive $0$'s.
Let $q,\kappa\in \N$ be fixed and consider the set of finite strings of 0's and 1's such that each string starts and ends with a 1, has exactly $\kappa$ 1's, which are separated by at most $q-1$ 0's, \ie there is no block of $q$ or more consecutive 0's in the string. Namely, let 
\begin{multline*}
W_q(\kappa)=\left\{\varpi\in \bigcup_{i=\kappa}^{q(\kappa-1)+1}\{0,1\}^i: V_0(\varpi)=V_{|\varpi|-1}(\varpi)=1,\, \sum_{i=0}^{|\varpi|-1}V_0(\shift^i(\varpi))=\kappa,\right.\\
 \left.\shift^i(\varpi)\in \bigcup_{j=0}^{q-1}V_j^{-1}(1),\, \mbox{for all $i=0,\ldots,|\varpi|-1$}\right\}, 
\end{multline*}
where we still use the notation $\shift$ and $V_j$ for the shift map and the projection on the $j$-th coordinate even when leading with finite strings and $|\varpi|$ is the length of the finite string $\varpi$. Finally we define:
$$
\mathcal H_q(\kappa)=h^{-1}\left(\left\{\underline \omega\in\{0,1\}^{\N_0}: \,\underline \omega=\underbrace{0\ldots 0}_{\mbox{ \tiny $q$ symbols}} \! \varpi \! \underbrace{0\ldots 0}_{\mbox{\tiny $q$ symbols}}\!\ldots,\;\mbox{for some $\varpi\in W_q(\kappa)$}\right\}\right)
$$
and also set
$$
\mathcal H_q(0)=h^{-1}\left(\left\{\underline \omega\in\{0,1\}^{\N_0}: \,\underline \omega=\underbrace{0\ldots 0}_{\mbox{ \tiny $q$ symbols}}\!\!\!1\ldots\right\}\right).
$$
Observe that $\mathcal H_q(0)$ determines the beginning of a new cluster, while $\mathcal H_q(\kappa)$ corresponds to the appearance of a cluster of size $\kappa$. Observe that $\mathcal H_q(\kappa)\subset\mathcal H_q(0)$ and to illustrate the definition we note that $0011010110100\ldots\in h(\mathcal H_2(6))$ and $0001001011000\ldots\in h(\mathcal H_3(4))$.

The next result gives an interpretation of $\pi_A$ defined in \eqref{def:pi-A} as the cluster size distribution, \ie as the probability of having a cluster of size $\kappa$ conditioned to knowing that we have initiated a cluster.
\begin{theorem}
Given $q\in\N$, consider the distribution $\pi_A$ given by \eqref{def:pi-A}. We can write:
\begin{equation}
\label{eq:cluster-size-distribution}
\pi_A(\kappa)=\p(\mathcal H_q(\kappa)|\mathcal H_q(0)). 
\end{equation}
\begin{remark} We note that the formula on the right hand side of \eqref{eq:cluster-size-distribution} can be identified precisely as the distribution of $C_r^{|k}$ considered in \cite{R13} for the cluster size distribution.
\end{remark}
\begin{proof}
By definition of $Q_q^{(\kappa)}(A)$ given in \eqref{def:Qk}, we have
$$
\mathcal H_q(\kappa)=\shift^{-q}(Q_q^{(\kappa-1)}(A))\setminus \bigcup_{i=0}^{q-1} \left(\shift^{-i}(Q_q^{(\kappa)}(A))\cap \shift^{-q}(Q_q^{(\kappa-1)}(A))\right).
$$
Observe that $ \left(\shift^{-i}(Q_q^{(\kappa)}(A))\cap \shift^{-q}(Q_q^{(\kappa-1)}(A))\right)\bigcap \left(\shift^{-j}(Q_q^{(\kappa)}(A))\cap \shift^{-q}(Q_q^{(\kappa-1)}(A))\right)=\emptyset$, for all $0\leq i\neq j\leq q-1$. To see this, assume w.l.o.g. that $0\leq i<j\leq q-1$ and take $\underline x=(x_0, x_1,\ldots)\in\mathcal R^{\N_0}$ such that $\underline x \in \shift^{-i}(Q_q^{(\kappa)}(A))\cap \shift^{-q}(Q_q^{(\kappa-1)}(A))$ then realise that $V_{i}(h(\underline x)))=1$ and $V_{\ell}(h(\underline x)))=0$ for all $\ell=i+1,\ldots,q-1$, while $i<j\leq q-1$ and $\underline x \in \shift^{-j}(Q_q^{(\kappa)}(A))\cap \shift^{-q}(Q_q^{(\kappa-1)}(A))$ means that in particular that $V_{j}(h(\underline x)))=1$, which establishes that the two events are definitely incompatible. Hence, by stationarity we have:
\begin{align*}
\p(\mathcal H_q(\kappa))&=\p(\shift^{-q}(Q_q^{(\kappa-1)}(A)))-\sum_{i=0}^{q-1} \p\left(\shift^{-i}(Q_q^{(\kappa)}(A))\cap \shift^{-q}(Q_q^{(\kappa-1)}(A))\right)\\
&=\p(Q_q^{(\kappa-1)}(A))-\sum_{i=0}^{q-1} \p\left(Q_q^{(\kappa)}(A)\cap \shift^{-q+i}(Q_q^{(\kappa-1)}(A))\right).
\end{align*}
Now, we claim that $Q_q^{(\kappa)}(A)=\bigcupdot_{i=0}^{q-1}Q_q^{(\kappa)}(A)\cap \shift^{-q+i}(Q_q^{(\kappa-1)}(A))$, where $\cupdot$ stands for disjoint union. To see this observe that 
\begin{align}
Q_q^{(\kappa)}(A)&=h^{-1}\left(\left\{\underline \omega\in\{0,1\}^{\N_0}: \underline \omega=\varpi\ldots,\;\mbox{for some $\varpi\in W_q(\kappa+1)$}\right\}\right)\nonumber\\
&=\bigcupdot_{i=0}^{q-1}h^{-1}\left(\left\{\underline \omega\in\{0,1\}^{\N_0}: \underline \omega=1\!\underbrace{0\ldots0}_{\mbox{\tiny i symbols}}\!\varpi\ldots,\;\mbox{for some $\varpi\in W_q(\kappa)$}\right\}\right)\nonumber\\
\label{eq:Q^k-recursive}
&=\bigcupdot_{j=1}^{q}Q_q^{(\kappa)}(A)\cap \shift^{-j}(Q_q^{(\kappa-1)}(A)).
\end{align}
It follows that
$$
\p(\mathcal H_q(\kappa))=\p(Q_q^{(\kappa-1)}(A))-\p(Q_q^{(\kappa)}(A)).
$$
Note that $Q_q^{(0)}(A)= \shift^{-1}(\mathscr W_{[0,q)}(A))\setminus\mathscr W_{[0,q+1)}(A)$ and $\mathcal H_q(0)=\mathscr W_{[0,q)}(A)\setminus\mathscr W_{[0,q+1)}(A)$. Therefore, by stationarity $\p(Q_q^{(0)}(A))=\p(\mathcal H_q(0))=\p(\mathscr W_{[0,q)}(A))-\p(\mathscr W_{[0,q+1)}(A))$. Recalling that $\mathcal H_q(\kappa)\subset \mathcal H_q(0)$ we obtain:
$$
\p(\mathcal H_q(\kappa)|\mathcal H_q(0))=\frac{\p(\mathcal H_q(\kappa))}{\p(\mathcal H_q(0))}=\frac{\p(Q_q^{(\kappa-1)}(A))-\p(Q_q^{(\kappa)}(A))}{\p(Q_q^{(0)}(A))}=\pi_A(\kappa).
$$
\end{proof}
\end{theorem}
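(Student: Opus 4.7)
The plan is to reduce the identity \eqref{eq:cluster-size-distribution} to proving two probability statements: (i) $\p(\mathcal H_q(0)) = \p(Q_q^{(0)}(A))$, and (ii) $\p(\mathcal H_q(\kappa)) = \p(Q_q^{(\kappa-1)}(A)) - \p(Q_q^{(\kappa)}(A))$ for each $\kappa \geq 1$. Since $\mathcal H_q(\kappa)\subseteq \mathcal H_q(0)$, the ratio of (ii) over (i) yields exactly $\pi_A(\kappa) = \p(\mathcal H_q(\kappa)\mid \mathcal H_q(0))$. Statement (i) follows from unfolding both events in terms of the sets $\mathscr W_J(A)$ of \eqref{eq:W-def}: writing $\mathcal H_q(0) = \mathscr W_{[0,q)}(A)\setminus \mathscr W_{[0,q+1)}(A)$ and $Q_q^{(0)}(A) = V_0^{-1}(A)\cap \mathscr W_{[1,q+1)}(A)$, partitioning $\mathscr W_{[1,q+1)}(A)$ according to whether $V_0\in A$ and invoking the $\shift$-invariance $\p(\mathscr W_{[0,q)}(A)) = \p(\mathscr W_{[1,q+1)}(A))$, one sees that both sides equal $\p(\mathscr W_{[0,q)}(A)) - \p(\mathscr W_{[0,q+1)}(A))$.

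For (ii) my strategy is to prove the set identity
\[
\mathcal H_q(\kappa) = \shift^{-q}(Q_q^{(\kappa-1)}(A)) \setminus \bigcupdot_{i=0}^{q-1}\bigl(\shift^{-i}(Q_q^{(\kappa)}(A))\cap \shift^{-q}(Q_q^{(\kappa-1)}(A))\bigr)
\]
with the subtracted family pairwise disjoint, and then to collapse the resulting sum through the recursive decomposition $Q_q^{(\kappa)}(A) = \bigcupdot_{j=1}^{q} Q_q^{(\kappa)}(A)\cap \shift^{-j}(Q_q^{(\kappa-1)}(A))$, which is verified by noting that a cluster of size $\kappa+1$ starting at time $0$ has its second observation of $A$ at some position $j\in\{1,\ldots,q\}$ with the remainder forming a $Q_q^{(\kappa-1)}(A)$-configuration. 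Applying $\shift$-invariance to rewrite each subtracted probability as $\p(Q_q^{(\kappa)}(A)\cap \shift^{-(q-i)}(Q_q^{(\kappa-1)}(A)))$ and reindexing $j=q-i$ makes the sum collapse to $\p(Q_q^{(\kappa)}(A))$, yielding (ii).

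The main obstacle is the nontrivial direction of the above set identity. Given $\underline x\in \shift^{-q}(Q_q^{(\kappa-1)}(A))$ outside $\mathcal H_q(\kappa)$, some index $i\in\{0,\ldots,q-1\}$ must satisfy $V_i(\underline x)\in A$; I would take the \emph{rightmost} such $i^*$. The absence of further observations of $A$ strictly between $i^*$ and $q$ ensures that the gap from $i^*$ to the leading 1 of the $Q_q^{(\kappa-1)}(A)$-cluster at position $q$ is at most $q-1$, so the 1 at $i^*$ merges with that cluster into a forward cluster of exactly $\kappa+1$ observations terminated by $q$ silences, placing $\underline x\in \shift^{-i^*}(Q_q^{(\kappa)}(A))$. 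Pairwise disjointness for $0\leq i<j\leq q-1$ is then immediate: simultaneous membership in $\shift^{-i}(Q_q^{(\kappa)}(A))$ and $\shift^{-j}(Q_q^{(\kappa)}(A))$ would force the forward cluster from $i$ to contain the full $(\kappa+1)$-cluster from $j$ together with the extra 1 at $i$, contradicting the exact cardinality $\kappa+1$ required by $Q_q^{(\kappa)}(A)$ at position $i$.
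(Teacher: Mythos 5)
Your proposal is correct and follows essentially the same route as the paper: the same set identity expressing $\mathcal H_q(\kappa)$ as $\shift^{-q}(Q_q^{(\kappa-1)}(A))$ minus a pairwise disjoint union, the same recursive decomposition $Q_q^{(\kappa)}(A)=\bigcupdot_{j=1}^{q}Q_q^{(\kappa)}(A)\cap \shift^{-j}(Q_q^{(\kappa-1)}(A))$ to collapse the subtracted sum, and the same stationarity argument identifying $\p(\mathcal H_q(0))$ with $\p(Q_q^{(0)}(A))$ via the sets $\mathscr W_J(A)$. Your choice of the rightmost index $i^*$ for the nontrivial inclusion is exactly the right way to fill in the step the paper asserts "by definition".
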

\begin{remark}
Note that from \eqref{eq:Q^k-recursive}, we have $Q_q^{(\kappa)}(A)\subset\shift^{-j}(Q_q^{(\kappa-1)}(A))$, which by stationarity implies that $\p(Q_q^{(\kappa)}(A))\leq \p(Q_q^{(\kappa-1)}(A))$.
\end{remark}
From the formula \eqref{def:pi-A} we can easily derive a formula for the mean finite time cluster size distribution, which will see below to coincide with the reciprocal of the definition of the Extremal Index. 
\begin{theorem}
\label{thm:finite-mean}
Let $q\in\N$ and consider $U^{(\infty)}(A)$ defined as in \eqref{def:Uinfty} and the distribution $\pi_A$ given by \eqref{def:pi-A}. If $\p(U^{(\infty)}(A))=0$, then
$$
\sum_{j=1}^\infty j\pi_A(j)=\frac{\p(U^{(0)}(A))}{\p(Q_{q}^{(0)}(A))}.
$$
\end{theorem}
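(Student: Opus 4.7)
The plan is to turn $\sum_{j\geq 1} j\pi_A(j)$ into a telescoping argument by writing $a_j := \p(Q_q^{(j)}(A))$ and noting that
\[
\sum_{j=1}^{\infty} j\,\pi_A(j)=\frac{1}{\p(Q_q^{(0)}(A))}\sum_{j=1}^{\infty} j\bigl(a_{j-1}-a_j\bigr).
\]
The right-hand sum I would handle by Abel summation. Writing the partial sum up to $N$ and shifting the index in one of the two pieces gives
\[
\sum_{j=1}^{N} j(a_{j-1}-a_j)=\sum_{j=0}^{N-1}a_j-N a_N,
\]
so the theorem reduces to proving that $\sum_{j=0}^{\infty}a_j=\p(U^{(0)}(A))$ and that $N a_N\to 0$.

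For the first identity I would use the recursive definition \eqref{def:Uk}. Since $U^{(j)}(A)=U^{(j-1)}(A)\setminus Q_q^{(j-1)}(A)$ and $Q_q^{(j-1)}(A)\subset U^{(j-1)}(A)$, the sets $\{Q_q^{(j)}(A)\}_{j\geq 0}$ are pairwise disjoint and
\[
U^{(0)}(A)=U^{(N)}(A)\,\cupdot\,\bigcupdot_{j=0}^{N-1}Q_q^{(j)}(A).
\]
Letting $N\to\infty$ and using the definition of $U^{(\infty)}(A)$ in \eqref{def:Uinfty} together with the hypothesis $\p(U^{(\infty)}(A))=0$ yields
\[
\p(U^{(0)}(A))=\sum_{j=0}^{\infty}\p(Q_q^{(j)}(A))=\sum_{j=0}^{\infty}a_j.
\]

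For the tail condition $N a_N\to 0$, I would invoke the remark following the previous theorem, which asserts that $a_j$ is non-increasing. Since $\sum_j a_j\leq\p(U^{(0)}(A))<\infty$, the standard fact that a convergent series of non-increasing non-negative terms satisfies $j a_j\to 0$ (bound $(n+1)a_{2n}\le\sum_{j=n}^{2n}a_j$, which is a tail of the convergent series) delivers the vanishing of the boundary term. Putting the two facts into the Abel-summed identity and dividing by $\p(Q_q^{(0)}(A))$ finishes the proof.

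The only delicate step is the vanishing of the boundary term $Na_N$, which could in principle fail if $a_N$ decayed too slowly; this is precisely where the monotonicity observed in the remark is essential, and where the hypothesis $\p(U^{(\infty)}(A))=0$ enters, via summability of $\sum a_j$.
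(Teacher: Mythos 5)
Your proof is correct, and it follows the same backbone as the paper's — the disjointness of the sets $Q_q^{(j)}(A)$, the decomposition $U^{(0)}(A)=U^{(\infty)}(A)\cup\bigcupdot_{j\geq0}Q_q^{(j)}(A)$, and the hypothesis $\p(U^{(\infty)}(A))=0$ to get $\sum_{j\geq0}\p(Q_q^{(j)}(A))=\p(U^{(0)}(A))$ — but it executes the summation step differently. You use Abel summation on the partial sums, which produces the boundary term $N\p(Q_q^{(N)}(A))$, and you then kill it with the classical fact that a summable non-increasing non-negative sequence satisfies $Na_N\to0$ (correctly invoking the monotonicity from the remark after the preceding theorem). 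The paper instead writes $\sum_j j\pi_A(j)=\sum_{i\geq1}\sum_{j\geq i}\pi_A(j)$, an interchange justified by non-negativity of the terms, so that each inner sum telescopes to $\p(Q_q^{(i-1)}(A))/\p(Q_q^{(0)}(A))$ using only $a_j\to0$ (which follows from summability alone) and no boundary term ever appears. The two routes are logically equivalent; yours costs an extra lemma but makes explicit where a slow decay of $\p(Q_q^{(N)}(A))$ could in principle cause trouble, while the paper's Tonelli-style swap sidesteps that issue entirely. One small imprecision: you say the hypothesis $\p(U^{(\infty)}(A))=0$ enters ``via summability of $\sum a_j$'' --- in fact $\sum_j a_j\leq\p(U^{(0)}(A))<\infty$ holds from disjointness alone; the hypothesis is needed only to upgrade this inequality to the equality $\sum_j a_j=\p(U^{(0)}(A))$, which is where you do in fact use it.
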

\begin{proof}
Observe that by construction, for all $n\in\N$, we have $Q_{q}^{(\kappa)}(A)\cap Q_{q}^{(j)}(A)=\emptyset$, for all $\kappa\neq j$. Moreover, $U^{(0)}(A)=\bigcup_{\kappa=0}^\infty Q_{q}^{(\kappa)}(A)\cup U^{(\infty)}(A)$ and then, by assumption, we have
$$
\p(U^{(0)}(A))=\sum_{\kappa=0}^\infty \mu_{\alpha}(Q_{q}^{(\kappa)}(A)).
$$
It follows that
\begin{align*}
\sum_{j=1}^\infty j\pi_A(j)&=\sum_{i=1}^\infty\sum_{j=i}^\infty \pi_A(j)=\sum_{i=1}^\infty\sum_{j=i}^\infty \frac{\p(Q_{q}^{(j-1)}(A))-\p(Q_{q}^{(j)}(A))}{\p(Q_{q}^{(0)}(A))}=\frac{\sum_{i=1}^\infty\p(Q_{q}^{(i-1)}(A))}{\p(Q_{q}^{(0)}(A))}\\
&=\frac{\p(U^{(0)}(A))}{\p(Q_{q}^{(0)}(A))}.
\end{align*}
\end{proof}
\begin{corollary}
If $\shift$ is ergodic w.r.t. $\p$ and $\p(\mathscr W_{[0,q+1)}(A))>0$ then $\p(U^{(\infty)}(A))=0$ and therefore the statement of the previous theorem holds.
\end{corollary}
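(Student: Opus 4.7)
The plan is to show that the event $U^{(\infty)}(A)$ is incompatible with the orbit ever entering $\mathscr W_{[0,q+1)}(A)$, and then exploit ergodicity to argue that almost every orbit does enter this set (of positive measure) infinitely often.

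First, I would recall the symbolic description of $U^{(\infty)}(A)$ given just before \eqref{eq:W-def}: if $\underline x\in U^{(\infty)}(A)$ then $h(\underline x)$ starts with a $1$ and contains no block of $q$ or more consecutive $0$'s. In particular, $h(\underline x)$ cannot contain a block of $q+1$ consecutive $0$'s. Translating this back into the coordinate-variable language, this means precisely that $\shift^n(\underline x)\notin \mathscr W_{[0,q+1)}(A)$ for every $n\in\N_0$, since $\shift^n(\underline x)\in \mathscr W_{[0,q+1)}(A)$ would force $V_n,V_{n+1},\ldots,V_{n+q}$ of $\underline x$ to all lie in $A^c$, giving $q+1$ consecutive $0$'s in $h(\underline x)$ starting at position $n$.

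Next, introduce the set of orbits that forever avoid $\mathscr W_{[0,q+1)}(A)$:
\[
E:=\bigcap_{n\geq 0}\shift^{-n}\bigl(\mathscr W_{[0,q+1)}(A)^c\bigr).
\]
By the previous step, $U^{(\infty)}(A)\subseteq E$, so it suffices to show $\p(E)=0$. A direct computation gives $\shift^{-1}(E)=\bigcap_{n\geq 1}\shift^{-n}(\mathscr W_{[0,q+1)}(A)^c)\supseteq E$; since $\p$ is $\shift$-invariant the two sets have the same measure, so $E$ is invariant modulo $\p$-null sets.

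Finally, by ergodicity $\p(E)\in\{0,1\}$. But $E$ is disjoint from $\mathscr W_{[0,q+1)}(A)$ (take $n=0$), and by hypothesis $\p(\mathscr W_{[0,q+1)}(A))>0$, so $\p(E)<1$ and hence $\p(E)=0$. Therefore $\p(U^{(\infty)}(A))=0$ and the hypothesis of Theorem~\ref{thm:finite-mean} is satisfied, which yields the conclusion. The argument is very short and the only real step is the symbolic translation in the first paragraph; one could equivalently replace the invariance-and-ergodicity argument by a direct appeal to Birkhoff's theorem, which guarantees that the ergodic averages of $\I_{\mathscr W_{[0,q+1)}(A)}$ converge $\p$-a.e.\ to $\p(\mathscr W_{[0,q+1)}(A))>0$, forcing almost every orbit to visit $\mathscr W_{[0,q+1)}(A)$.
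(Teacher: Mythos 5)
Your proof is correct and follows essentially the same route as the paper's: both exhibit an almost-invariant set containing $U^{(\infty)}(A)$ and disjoint from the positive-measure set $\mathscr W_{[0,q+1)}(A)$, then conclude by ergodicity that this set is null. The only (cosmetic) difference is the choice of auxiliary set --- you take the orbits that never visit $\mathscr W_{[0,q+1)}(A)$, while the paper takes $\bigcup_{i=0}^{q}\shift^{-i}(U^{(\infty)}(A))$; both are invariant modulo $\p$-null sets for the same reason.
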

\begin{proof}
Let $B^\infty=\bigcup_{i=0}^q\shift^{-i}(U^{(\infty)}(A))$. Observe that $\shift^{-1}(B^\infty)\subset B^\infty$ and since by invariance of $\p$ we also have $\p(\shift^{-1}(B^\infty))=\p(B^\infty)$ then $\p(\shift^{-1}(B^\infty)\triangle B^\infty)=0$, which means that by ergodicity $\p(B^\infty)=0$ or $\p(B^\infty)=1$. Since $\mathscr W_{[0,q+1)}(A)\subset (B^\infty)^c$, then the hypothesis guarantees that $\p(B^\infty)\neq1$ and the conclusion follows.
\end{proof}

\subsection{Point processes and the extremal index}
\label{subsec:point-processes}

Our goal is to keep record of the number of occurrences of $A$ on a certain time frame and then be able to provide statements regarding its asymptotic behaviour. We will do so by considering point process theory. The asymptotics comes to play by considering events that are rarer and rarer, \ie we will consider a nested sequence of sets $A_n$ such that $\lim_{n\to\infty}\p(A_n)=0$. In fact, we will use the framework of Extreme Value Theory where $\{X_0\in A_n\}$ corresponds to an exceedance $\{X_0>u_n\}$ of a threshold $u_n$, where $u_n$ is converging to the right hand point of the support of the distribution function of $X_0$ (which may be $+\infty$). We remark that there is no loss of generality in doing so because one could always find an auxiliary stochastic process $Y_0, Y_1, \ldots$ such that $\{X_0\in A_n\}=\{Y_0>u_n\}$ (see \cite{FFT10,FFT11}).

We assume that the sequence of levels $(u_n)_{n\in\N}$ satisfies the condition:
\begin{equation}
\label{un}
\lim_{n\to\infty}n\p(X_0>u_n)=\tau,
\end{equation}
for some $\tau>0$. This condition is requiring that the average frequency of exceedances of the level $u_n$ among the $n$ first observations is asymptotically constant. Note that this, in particular, implies that $\lim_{n\to\infty}u_n=\sup\{x\in\R: \p(X_0\leq x)<1\}$.

Let $E=[0,\infty)$. We say that $m$ is a \emph{point measure} on $E$ if $m=\sum_{i=1}^\infty\delta_{x_i}$, where $\delta_{x_i}$ denotes the Dirac measure supported on $x_i\in E$. We say that $m$ is \emph{simple} if all the $x_i$ are distinct and that $m$ is \emph{Radon} if $m(K)<\infty$ for all compact $K\subset E$. Consider the space $M_p(E)$ of all the Radon point measures defined on $E$ endowed with the vague topology. A \emph{point process} on $E$ is just a random element on $M_p(E)$ and we will be particularly interested on the following:
\begin{equation}
\label{def:Nn}
N_n=\sum_{i=0}^\infty \delta_{\frac i n}\I_{\{X_i>u_n\}}.
\end{equation}
Note that $N_n([0,1))$ counts the number of exceedances among the first $n$ observations of the process. Moreover, on account of \eqref{un}, for any interval $J\subset E$, we have that $\E(N_n(J))\to\tau|J|$, where $|J|$ denotes the Lebesgue measure of $J$.

Our main goal is to study the weak convergence of $N_n$. A point process $N$ on $E$ is the weak limit of $N_n$ if for any finite number of intervals of the form $J_\ell=[a_\ell,b_\ell)$, with $\ell=1,\ldots,\varsigma$, we have that the random vector $(N_n(J_1), \ldots, N_n(J_\varsigma))$ converges in distribution to $(N(J_1), \ldots, N(J_\varsigma))$ (see \cite{K86}).

We will see that under certain conditions the weak limit $N$ is a compound Poisson process, which can be described in the following way. Let $W_1, W_2,\ldots$ be an iid sequence of exponentially distributed random variables with mean $1/\eta>0$, \ie $W_i\sim \mbox{Exp}(\eta)$. Let $T_i=\sum_{j=1}^i W_i$ and $D_1, D_2,\ldots$ be an iid sequence of positive integer valued random variables independent of $T_1, T_2, \ldots$. Then $N=\sum_{i=1}^\infty D_i\delta_{T_i}$. Typically, $T_i$ corresponds to the time of appearance of the $i-th$ cluster and $D_i$ the respective size. We say that $n$ is a compound Poisson process with intensity $\eta$ and multiplicity distribution given by $\pi(\kappa)=\p(D_1=\kappa)$.

The weak convergence of $N_n$ gives a lot of information about the limiting behaviour of the order statistics of a finite sample of $X_0, X_1,\ldots$. In particular, if $M_n=\max\{X_0,\ldots,X_{n-1}\}$ we have $\{M_n\leq u_n\}=\{N_n([0,1))=0\}$. Therefore, $\lim_{n\to\infty}\p(M_n\leq u_n)=\p(N([0,1)=0)$. When we have a compound Poisson process in the limit then $\p(N([0,1)=0)=\p(W_1>1)=\e^{-\eta}$. Since in most situations $\E(N_n([0,1)))=\tau$ then $\eta=\tau/\E(D_1)$. This motivates the following definition.

\begin{definition}
\label{def:EI}
Consider a sequence $(u_n)_{n\in\N}$ such that \eqref{un} holds. We say we have an Extremal Index (EI) $0\leq\theta\leq 1$ if $\lim_{n\to\infty}\p(M_n\leq u_n)=\e^{-\theta\tau}$.
\end{definition}

Usually, we have that $\theta^{-1}=\E(D_1)$ and the EI can be interpreted as a measure of the intensity of clustering, so that $\theta=1$ means the absence of clustering. We will build examples where this relation between the EI and $\E(D_1)$ does not hold anymore.

\subsection{Convergence of point processs}
\label{subsec:convergence-PP}

In order to obtain the convergence of the point processes introduced above we will use two conditions on the dependence structure of original stochastic process $X_0, X_1, \ldots$. We introduce the notation for all $\kappa\in\N_0\cup\{\infty\}$:
$$
U^{(\kappa)}(u_n):=U^{(\kappa)}([u_n,\infty)), \quad Q_q^{(\kappa)}(u_n):=Q_q^{(\kappa)}([u_n,\infty)) \quad\mbox{and}\quad \pi_n(\kappa):=\pi_{[u_n,\infty)}(\kappa)$$
The first condition is a sort of mixing condition specially designed for this extreme analysis with applications to dynamically generated stochastic processes. It was introduced in \cite{FFT13}.
\begin{condition}[$\D_q(u_n)^*$]\label{cond:Dp*}We say that $\D_q(u_n)^*$ holds
for the sequence $X_0,X_1,\ldots$ if for any integers $t, \kappa_1,\ldots,\kappa_\zeta$, $n$ and
 any intervals of the form $I_j=[a_j,b_j)$ with $a_{j+1}\geq b_j$ for all $j=1,\ldots \zeta-1$ and such that $a_1\ge t$,
 \begin{align*}
 \Big|\p\left(Q_{q}^{(\kappa_1)}(u_n)\cap \left(\cap_{j=2}^\zeta N_n(I_j)=\kappa_j \right) \right)&-\p\left(Q_{q}^{(\kappa_1)}(u_n)\right)
  \p\left(\cap_{j=2}^\zeta N_n(I_j)=\kappa_j \right)\Big|\\ & \leq \gamma(q,n,t),
\end{align*}
where for each $n$ we have that $\gamma(q,n,t)$ is nonincreasing in $t$  and
$n\gamma(q,n,t_n)\to0$  as $n\rightarrow\infty$, for some sequence
$t_n=o(n)$.
\end{condition}

For some fixed $q\in\N_0$, consider the sequence $(t_n)_{n\in\N}$, given by condition  $\D_q(u_n)$ and let $(k_n)_{n\in\N}$ be another sequence of integers such that
\begin{equation}
\label{eq:kn-sequence}
k_n\to\infty\quad \mbox{and}\quad  k_n t_n = o(n).
\end{equation}

\begin{condition}[$\D'_q(u_n)^*$]\label{cond:D'q} We say that $\D'_q(u_n)^*$
holds for the sequence $X_0,X_1,X_2,\ldots$ if there exists a sequence $(k_n)_{n\in\N}$ satisfying \eqref{eq:kn-sequence} and such that
\begin{equation}
\label{eq:D'rho-un}
\lim_{n\rightarrow\infty}\,n\sum_{j=q+1}^{\lfloor n/k_n\rfloor-1}\p\left( Q_{q}^{(0)}(u_n)\cap \shift^{-j}(U^{(0)}(u_n))\right)
=0.
\end{equation}
\end{condition}
Note that condition $\D'_q(u_n)^*$ is just condition $D^{(q+1)}(u_n)$ in the formulation of \cite[Equation (1.2)]{CHM91}. 
\begin{remark}
\label{rem:D'-declustering-relation}
Observe that condition $\D'_q(u_n)^*$  is forbidding (or making very unlikely) the appearance of two clusters in a very short period of time (namely, within a block of size $\lfloor n/k_n\rfloor$), which means that the two declustering schemes identify clusters essentially in the same way. Moreover, in some sense, one could say that a time gap of length $q$ inhibits the possibility of the underlying periodic phenomena creating a new exceedance that should be classified as belonging to the same cluster as the preceding exceedance because the period time has been exhausted.
\end{remark}

\begin{remark}
\label{rem:roles-of-q}
Note that if condition $\D'_q(u_n)^*$ holds for some particular $q=q_0\in\N_0$, then condition $\D'_q(u_n)^*$ holds for all $q\geq q_0$. This suggests that in trying to prove the convergence of REPP, one should try the values $q=q_0$ until we find the smallest one that makes $\D'_q(u_n)^*$ hold, establishing, in this way, the run length. In the dynamical context, the following procedure has proved very useful to find the value of $q$ (see \cite{AFFR16}). Let $\underline x\in \mathcal R^{\N_0}$ and recall the definition of the first hitting time to $A\in\mathcal B$,
$
r_A(\underline x)=\min\left\{j\in\N\cup\{+\infty\}:\; \shift^j(\underline x)\in V_0^{-1}(A)\right\}.
$
The restriction of the function $r_A$ to $A$ is called the \emph{first return time function} to $A$. We define the \emph{first return time} to $A$, which we denote by $R(A)$, as the infimum of the return time function to $A$, \ie the non-negative integer
$R(A)=\inf_{\underline x\in V_0^{-1}(A)} r_A(\underline x).$
Assume that there exists $q\in\N_0$ such that
\begin{equation}
\label{eq:q-def EVL}
q:=\min\left\{j\in\N_0: \lim_{n\to\infty}R(Q_{q}^{(0)}(u_n))=\infty\right\}.
\end{equation}
Then such $q$ is the natural candidate to try to show the validity of $\D'_q(u_n)^*$.
\end{remark}

Let us define for each $n\in\N$
\begin{equation}
\label{eq:theta-n}
\theta_n=\frac{\p(Q_q^{(0)}(u_n))}{\p(U^{(0)}(u_n))},
\end{equation}
which measures the proportion of realisations of $U^{(0)}(u_n))$, \ie exceedances of $u_n$ that do not produce another exceedance in the same cluster.

If there exists $0\leq\theta\leq1$ such that $\theta=\lim_{n\to\infty}\theta_n$, then under conditions $\D_q(u_n)^*$ and $\D'_q(u_n)^*$ we have that $\lim_{n\to\infty}\p(M_n\leq u_n)=\e^{-\theta\tau}$ (see \cite{FFT12,FFT15}), which means that $\theta$ is the EI. This formula for the EI has already appeared in the work of O'Brien \cite{O87}.

In the case where the exceedance corresponds to hitting time to a cylinder set of at least length $u_n$,
this formula was also used in \cite{A06,ACG15}, with $q$ equal to the periodicity of the cylinder.

From the study developed in \cite{FFT13} and as noticed in \cite[Appendix~B]{AFV15}, we can state the following result which applies to general stationary stochastic processes. A full proof of this result can be seen in \cite{FFM18}. 
\begin{theorem}[\cite{FFT13,FFM18}]
\label{thm:convergence-REPP}
Let $X_0, X_1, \ldots$ satisfy conditions $\D_q(u_n)^*$ and $\D'_q(u_n)^*$, where $(u_n)_{n\in\N}$ is such that \eqref{un} holds. Assume that
 the limit $\theta=\lim_{n\to\infty} \theta_n$ exists, where $\theta_n$ is as in \eqref{eq:theta-n} and moreover that for each $\kappa\in\N$, the following limit also exists
\begin{equation}
\label{eq:multiplicity}
\pi(\kappa):=\lim_{n\to\infty} \pi_{n}(\kappa)=\lim_{n\to\infty}\frac{
\left(\p(Q_{q}^{(\kappa-1)}(u_n))-\p(Q_{q}^{(\kappa)}(u_n))\right)}{
\p(Q_{q}^{(0)}(u_n))}.
\end{equation}
Then the REPP\index{Rare Event Point Process - REPP} $N_n$ converges in distribution to a compound Poisson process with intensity $\theta\tau$ and multiplicity distribution $\pi$ given by \eqref{eq:multiplicity}.
\end{theorem}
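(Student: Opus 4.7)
The natural route is to verify convergence of joint probability generating functions on finite families of disjoint intervals, a criterion characterising weak convergence in $M_p(E)$. The aim is to prove that for every $\varsigma\in\N$, pairwise disjoint half-open intervals $J_1,\ldots,J_\varsigma\subset[0,\infty)$ and $s_1,\ldots,s_\varsigma\in[0,1)$,
$$
\E\Bigl[\prod_{\ell=1}^\varsigma s_\ell^{N_n(J_\ell)}\Bigr]\;\xrightarrow[n\to\infty]{}\;\prod_{\ell=1}^\varsigma \exp\bigl(-\theta\tau|J_\ell|(1-\hat\pi(s_\ell))\bigr),
$$
with $\hat\pi(s):=\sum_{\kappa\geq 1}\pi(\kappa)s^\kappa$. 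The right-hand side is the joint PGF of the claimed compound Poisson process, using independence of increments of the underlying Poisson process of cluster locations.

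For each $\ell$, partition $nJ_\ell$ into $k_n$ consecutive sub-blocks of length $\lfloor n|J_\ell|/k_n\rfloor$ and sever a right-end buffer of length $t_n$ from every sub-block, with $(k_n,t_n)$ as in \eqref{eq:kn-sequence}. A standard telescoping application of Condition~\ref{cond:Dp*} over the $\varsigma k_n$ sub-blocks --- after expanding each sub-block event $\{N_n(I)=\kappa\}$ as a disjoint union of cluster configurations of the $Q_q^{(\cdot)}(u_n)$-form required by the condition --- factorises the joint PGF into a product of single-sub-block PGFs, with total decoupling error $O(\varsigma k_n\gamma(q,n,t_n))=o(1)$, while the expected loss from exceedances in the severed buffers is $O(\varsigma k_n t_n\,\tau/n)=o(1)$ by \eqref{un} and \eqref{eq:kn-sequence}. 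On a generic sub-block $I$ of length $b_n\approx n|J_\ell|/k_n$, the interpretation \eqref{eq:cluster-size-distribution} identifies the initiation of a cluster at a given time with an occurrence of the shifted $\mathcal H_q(0)$ event, of probability $\p(Q_q^{(0)}(u_n))$. Writing $\lambda_n:=b_n\p(Q_q^{(0)}(u_n))$, Condition~\ref{cond:D'q} bounds the probability of two separated cluster starts inside $I$ by
$$
b_n\sum_{j=q+1}^{b_n-1}\p\bigl(Q_q^{(0)}(u_n)\cap\shift^{-j}(U^{(0)}(u_n))\bigr)=o(1/k_n),
$$
so that $I$ hosts at most one cluster with probability $1-o(1/k_n)$, and conditionally on its presence the cluster's size is distributed as $\pi_n$ by \eqref{eq:cluster-size-distribution}. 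Hence
$$
\E\bigl[s^{N_n(I/n)}\bigr]=1-\lambda_n\bigl(1-\hat\pi_n(s)\bigr)+o(1/k_n).
$$
Since $k_n\lambda_n=k_n b_n\theta_n\p(U^{(0)}(u_n))\to|J_\ell|\theta\tau$ by \eqref{un} together with $\theta_n\to\theta$, and $\hat\pi_n(s)\to\hat\pi(s)$ pointwise on $[0,1)$ by dominated convergence (each $\hat\pi_n$ is bounded by $1$), the classical limit $(1-c/k_n+o(1/k_n))^{k_n}\to\e^{-c}$ yields the desired exponential on each $J_\ell$, and the product over $\ell$ matches the target.

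\textbf{Main obstacle.} The delicate step is the single-sub-block analysis. One must cleanly distinguish ``an exceedance in $I$ belonging to a cluster that started just before $I$'' from a genuinely new cluster start inside $I$, and absorb every boundary-straddling cluster tail into the buffer of length $t_n\gg q$ so that its contribution --- controlled by a further appeal to Condition~\ref{cond:D'q} --- remains $o(1/k_n)$ per sub-block and therefore $o(1)$ when summed over the $\varsigma k_n$ sub-blocks. A related subtlety, already signalled in Remark~\ref{rem:roles-of-q}, is that $q$ must exceed the period of every recurrent mechanism producing clustering; for smaller $q$, Condition~\ref{cond:D'q} fails and the cluster-size interpretation \eqref{eq:cluster-size-distribution} would erroneously merge distinct clusters. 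The iteration of $\D_q(u_n)^*$ --- whose fixed form places a $Q_q^{(\kappa_1)}(u_n)$ at the leftmost slot --- against the generic events $\{N_n(I)=\kappa\}$ requires expanding each such event into a countable disjoint union of cluster configurations and summing the resulting $\gamma(q,n,t_n)$-errors; although routine, this bookkeeping is where most of the technical labour resides.
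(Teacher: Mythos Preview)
The paper does not supply its own proof of this theorem: it is quoted as a result from \cite{FFT13} with a full proof deferred to \cite{FFM18}. There is therefore no in-paper argument to compare against.

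That said, your outline is the standard blocking argument underlying those references, and the overall architecture is correct: Kallenberg's criterion via joint Laplace/PGF convergence, partition of each $nJ_\ell$ into $k_n$ sub-blocks with buffers of length $t_n$, telescoping factorisation via $\D_q(u_n)^*$, and the use of $\D'_q(u_n)^*$ to force at most one cluster per sub-block so that the single-block PGF takes the form $1-\lambda_n(1-\hat\pi_n(s))+o(1/k_n)$. The limit computation $k_n\lambda_n\to |J_\ell|\theta\tau$ and $\hat\pi_n\to\hat\pi$ then gives the compound Poisson PGF.

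One point deserves more care than you give it. You write that the factorisation step requires ``expanding each sub-block event $\{N_n(I)=\kappa\}$ as a disjoint union of cluster configurations of the $Q_q^{(\cdot)}(u_n)$-form'' and then summing the $\gamma(q,n,t_n)$-errors. In the actual argument one does not expand the \emph{future} blocks this way; rather, one works with the event $\{N_n(I)=0\}=\W_{I}([u_n,\infty))$ on each sub-block first and shows, via $\D'_q(u_n)^*$, that $\p(\W_I)\approx 1-b_n\p(Q_q^{(0)}(u_n))$, then separately handles $\p\bigl(N_n(I)\geq \kappa\bigr)$ by anchoring the first exceedance in $I$ at some position $j$ and invoking the $Q_q^{(\kappa)}$-structure there. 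The mixing condition is applied with the anchored $Q_q^{(\kappa_1)}(u_n)$ event on the left and the raw $\{N_n(I_j)=\kappa_j\}$ events on the right, exactly as written in Condition~$\D_q(u_n)^*$; no expansion of the right-hand events into $Q$-configurations is needed, and attempting one would produce a sum of errors that is not obviously controlled. Apart from this bookkeeping issue your plan matches the method of the cited references.
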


Observe that by Theorem~\ref{thm:finite-mean}, for every $n\in\N$, if $\p(U^{(\infty)}(u_n))=0$, then the mean of the distribution $\pi_{[u_n,\infty)}$ is the reciprocal of $\theta_n$, \ie  $\sum_{\kappa=1}^\infty \kappa \pi_{n}(\kappa)=\theta_n^{-1}$. It follows that if there exists $0\leq\theta\leq 1$ such that $\theta=\lim_{n\to\infty} \theta_n$, then $\lim_{n\to\infty}\sum_{\kappa=1}^\infty \kappa \pi_{n}(\kappa)=\theta^{-1}$. 

We are going to build an example such that, although the latter equality holds, the same does not hold for the asymptotic distribution of the cluster size, \ie $\sum_{\kappa=1}^\infty \kappa \pi(\kappa)\neq\theta^{-1}$.

Before we do it, we revisit the examples introduced earlier in order to illustrate the usual behaviour.

\textbf{Example~\ref{exp:MMA-101} revisited}
\\
Note that by definition of the sequence $(u_n)_{n\in\N}$, we must have $\p(X_0>u_n)\to\tau\geq 0$, as $n\to\infty$. For simplicity let $\alpha_n=\p(Y_0\leq u_n)$. Then, since $\p(X_0>u_n)=2(1-\alpha_n)-(1-\alpha_n)^2$ we must have that $\alpha_n\to 1$ and $n(1-\alpha_n)\to\tau/2\geq 0$, as $n\to\infty$.

In order to exemplify the usefulness of the binary string notation introduced earlier  to describe the sets $Q_q^{(\kappa)}$, let $\ell\in\N$, $\varsigma\in\{0,1\}^\ell$ and define the cylinder $$C(\varsigma)=\{\underline\omega\in\{0,1\}^\N\colon \underline\omega=\varsigma\underline\omega^*, \mbox{ for some } \underline\omega^*\in\{0,1\}^\N\},$$
corresponding to the binary words which start with the string $\varsigma$. Then,\begin{align*}
Q_q^{(0)}(u_n)&=h^{-1}\left(C(100)\right)=\{X_0>u_n,X_1\leq u_n,X_2\leq u_n\}=\{Y_{-2}>u_n, Y_{-1},Y_0,Y_1,Y_2\leq u_n\},\\
Q_q^{(1)}(u_n)&=h^{-1}\left(C(1100)\cup C(10100)\right)=\\
&=\{Y_{0}>u_n, Y_{-1},Y_1,Y_2,Y_3, Y_4\leq u_n\}\cup \{Y_{-2}, Y_{-1}>u_n, Y_0,Y_1,Y_2,Y_3\leq u_n\};\\
Q_q^{(2)}(u_n)&=h^{-1}\left(C(11100)\cup C(110100)\cup C(101100)\cup C(1010100)\right)=\\
&=\{Y_{-1}, Y_{0}>u_n, Y_1,Y_2,Y_3,Y_4\leq u_n\}\cup \{Y_{-2},Y_{1}>u_n, Y_0,Y_2,Y_3,Y_4, Y_5\leq u_n\}\\&\qquad \cup \emptyset\cup\left( \{Y_{2}>u_n, Y_{-1},Y_{1},Y_3,Y_4,Y_5, Y_6\leq u_n\}\cap\left(\{Y_0>u_n\}\cup\{Y_{-2}>u_n\}\right)\right);
\end{align*}iid
Using that $Y_{-2},Y_{-1},Y_0,Y_1,\ldots$ is an iid sequence, we have
\begin{align*}
\p(Q_q^{(0)}(u_n))&=(1-\alpha_n)\alpha_n^4,\\
\p(Q_q^{(1)}(u_n))&=(1-\alpha_n)\alpha_n^5+(1-\alpha_n)^2\alpha_n^4;\\
\p(Q_q^{(2)}(u_n))
&=(1-\alpha_n)^2\alpha_n^4+(1-\alpha_n)^2\alpha_n^5+(1-\alpha_n)^2\alpha_n^6(1+\alpha_n).
\end{align*}
Using the formulae for the EI, \eqref{eq:theta-n}, and the multiplicity distribution, \eqref{def:pi-A}, we obtain:
\begin{align*}
\theta_n&
=\frac{(1-\alpha_n)\alpha_n^4}{2(1-\alpha_n)-(1-\alpha_n)^2}\xrightarrow[n\to\infty]{}\frac12=\theta,\\
\pi_n(1)&
=\frac{2(1-\alpha_n)^2\alpha_n^4}{(1-\alpha_n)\alpha_n^4}\xrightarrow[n\to\infty]{}0=\pi(1)
\\
\pi_n(2)&
=\frac{(1-\alpha_n)\alpha_n^5-(1-\alpha_n)^2\alpha_n^5-(1-\alpha_n)^2\alpha_n^6(1+\alpha_n)}{(1-\alpha_n)\alpha_n^4}\xrightarrow[n\to\infty]{}1=\pi(2)
\end{align*}
Observing that for $\kappa>2$, all the terms  of $\p(Q_q^{(\kappa)}(u_n))$ include a factor $(1-\alpha_n)^2$, one easily verifies that $\pi(\kappa)=0$ for all such $\kappa>2$. 

Also note that condition $\D_q(u_n)^*$ follows trivially from the fact that the process is $2$-dependent.

Regarding condition $\D'_q(u_n)$, observe that, for $j=3,4$, we have $$Q_{q}^{(0)}(u_n)\cap\{X_j>u_n\}=\{Y_{-2}, Y_j>u_n, Y_{-1},Y_0,Y_1,Y_2\leq u_n\}$$ and for $j\geq 5$, we have $$Q_{q}^{(0)}(u_n)\cap\{X_j>u_n\}=\{Y_{-2}>u_n, Y_{-1},Y_0,Y_1,Y_2\leq u_n\}\cap(\{X_{j-2}\geq u_n\}\cup\{X_{j}\geq u_n\}).$$ Then, clearly, $\p(Q_{q}^{(0)}(u_n)\cap\{X_j>u_n\})\leq2(1-\alpha_n)^2\alpha_n^4$, for all $j\geq3$.  Hence,
\[
\sum_{j=3}^{\lfloor n/k_n\rfloor} n\p(Q_{q}^{(0)}(u_n)\cap\{X_j>u_n\})\leq \frac{n}{k_n} n2(1-\alpha_n)^2\alpha_n^4 \xrightarrow[n\to\infty]{}0,
\]
by the properties of $\alpha_n$.

\textbf{Example~\ref{exp:geometric} revisited}\\
We take $u_n=-\log\tau + \log(2n)$ so that $\mu(X_0>u_n)=\tau/n$. We observe that $U^{(0)}(u_n)=(1/3-1/2n,1/3+1/2n)$ and for $\kappa\in\N_0$ we have 
$$Q_q^{(\kappa)}(u_n)=\left(\frac13-\frac1{4^\kappa}\frac\tau{2n},\frac13-\frac1{4^{\kappa+1}}\frac\tau{2n}\right)\cup\left(\frac13+\frac1{4^{\kappa+1}}\frac\tau{2n},\frac13+\frac1{4^\kappa}\frac\tau{2n}\right) \Rightarrow \mu(Q_q^{(\kappa)}(u_n))=\frac1{4^\kappa}\frac34\frac\tau n.$$
It follows that 
$$\theta_n=\frac{3\tau/4n}{\tau/n}\xrightarrow[n\to\infty]{}3/4=\theta \quad \mbox{and}\quad \pi_n(\kappa)=\frac{\frac1{4^{\kappa-1}}\frac34\frac\tau n-\frac1{4^{\kappa}}\frac34\frac\tau n}{\frac{3\tau}{4n}}\xrightarrow[n\to\infty]{}\frac1{4^{\kappa-1}}\frac34=\pi(\kappa).$$ Note that $\sum_{\kappa\geq1}\kappa\pi(\kappa)=4/3=\theta^{-1}$.

The convergence of the REPP to a compound Poisson process with a geometric multiplicity distribution is assured by the validity of conditions $\D_q(u_n)^*$ and $\D'_q(u_n)^*$, which follows from the fact that the system has decay of correlations against $L^1$. (See \cite[Chapter~4]{LFFF16} for further details).  

\textbf{Example~\ref{exp:mixture} revisited}
\\
Adjusting the computations of the previous example, we take $u_n=-\log\tau + \log(4n)$ so that $\mu(X_0>u_n)=\tau/n$. We observe that $U^{(0)}(u_n)=(1/3-1/4n,1/3+1/4n)\cup(5/7-1/4n,5/7+1/4n)$ and for $\kappa\in\N_0$ we have 
\begin{align*}Q_q^{(\kappa)}(u_n)&=\left(\frac13-\frac1{4^\kappa}\frac\tau{4n},\frac13-\frac1{4^{\kappa+1}}\frac\tau{4n}\right)\cup\left(\frac13+\frac1{4^{\kappa+1}}\frac\tau{4n},\frac13+\frac1{4^\kappa}\frac\tau{4n}\right)\cup\\
&\cup \left(\frac57-\frac1{8^\kappa}\frac\tau{4n},\frac57-\frac1{8^{\kappa+1}}\frac\tau{4n}\right)\cup\left(\frac57+\frac1{8^{\kappa+1}}\frac\tau{4n},\frac57+\frac1{8^\kappa}\frac\tau{4n}\right).
\end{align*}
Observing that $\mu(Q_q^{(\kappa)}(u_n))=\frac1{4^\kappa}\frac34\frac\tau{2n}+\frac1{8^\kappa}\frac78\frac\tau{2n}$, we have 
\begin{align*}
\theta_n&=\frac{\frac34\frac\tau{2n}+\frac78\frac\tau{2n}}{\tau/n}=\frac{13}{16}=\theta \\ \pi_n(\kappa)&=\frac{\frac1{4^{\kappa-1}}\left(\frac34\right)^2\frac\tau {2n}-\frac1{8^{\kappa-1}}\left(\frac78\right)^2\frac\tau {2n}}{\frac34\frac\tau {2n}-\frac78\frac\tau {2n}}=\frac{\frac1{4^{\kappa-1}}\left(\frac34\right)^2-\frac1{8^{\kappa-1}}\left(\frac78\right)^2}{\frac{13}8}=\pi(\kappa).\end{align*} Note that $\sum_{\kappa\geq1}\kappa\pi(\kappa)=4/3=\theta^{-1}$.

Again, the convergence of the REPP to a compound Poisson process  is assured by the validity of conditions $\D_q(u_n)^*$ and $\D'_q(u_n)^*$, which follows from the fact that the system has decay of correlations against $L^1$. (See \cite[Chapter~4]{LFFF16} for further details).

\section{Dynamical counterexamples}

Let us consider a one-dimensional family of maps with an indifferent fixed point of the \emph{Manneville-Pomeau} (MP) type.  We will be using the particular form given in \cite{LSV99}. Namely, for $\alpha>0$,
\begin{equation}
\label{def:LSV}
T=T_\alpha(x)=\begin{cases} x(1+2^\alpha x^\alpha) & \text{ for } x\in [0, 1/2)\\
2x-1 & \text{ for } x\in [1/2, 1]\end{cases}
\end{equation}
If $\alpha\in (0,1)$ then there is an absolutely continuous (w.r.t. Lebesgue) invariant probability $\mu_\alpha$, which is the case we will restrict to.
These maps have been studied in \cite{LSV99,Y99,H04} and, for each $\alpha\in (0,1)$, the system $([0,1], T_\alpha, \mu_\alpha)$ has polynomial decay of correlations.  That is, letting $\mathcal H_\beta$ denote the space of H\"older continuous functions $\phi$ with exponent $\beta$ equipped with the norm $\|\phi\|_{\mathcal H_\beta}=\|\phi\|_\infty+|\phi|_{\mathcal H_\beta}$, where $$|\phi|_{\mathcal H_\beta}=\sup_{x\neq y}\frac{|\phi(x)-\phi(y)|}{|x-y|^\beta},$$ 
 there exists $C>0$ such that for each $\phi\in \mathcal H_\beta$, $\psi\in L^\infty$ and all $t\in \N$,
\begin{equation}
\label{eq:Holder-DC}
\left| \int\phi\cdot(\psi\circ T^t)d\mu_\alpha-\int\phi d\mu_\alpha\int\psi
d\mu_\alpha\right|\leq C\|\phi\|_{\mathcal H_\beta}\|\psi\|_\infty \frac{1}{t^{\frac1\alpha-1}}.
\end{equation}
Let $h_\alpha=\frac{d\mu_\alpha}{dx}$. In \cite{H04}, Hu showed that $h_\alpha\in L^{1+\epsilon}$, with $\epsilon<1/\alpha-1$, $h_\alpha$ is Lipschitz on $[a,1]$ for all $0<a<1$ and moreover $\lim_{x\to0}\frac{h(x)}{x^{-\alpha}}=C_0>0$. Hence, for small $s>0$ we have that 
\begin{equation}
\label{eq:estimate-measure}
\mu_\alpha([0,s))\sim C_1 s^{1-\alpha},
\end{equation}
for some $C_1>0$, where the notation $A(s)\sim B(s)$ is used in the sense that  $\lim_{s\to0}\frac{A(s)}{B(s)}=1$. When the constant is unimportant, we will also use the notation $A(s)\sim_{c} B(s)$ in the sense that there is $c>0$ such that $\lim_{s\to0}\frac{A(s)}{B(s)}=c$.

Let 
$x$ be such that $T_{\alpha}(x)=y$, \ie $y=x+2^\alpha x^{1+\alpha}$. From the properties of the invariant density and \eqref{eq:estimate-measure} we get that there exists $C_1>0$ such that
\begin{equation}
\label{U_n}
\mu_\alpha([0,y))\sim {C_1} ( x^{1-\alpha}+(1-\alpha)2^{\alpha}x+o(x))
\end{equation}
\begin{equation}
\mu_\alpha([0,x))\sim {C_1} x^{1-\alpha}.
\end{equation}
\begin{equation}
\label{x_n,y_n}
\mu_\alpha([x,y))\sim_{c} (1-\alpha)2^{\alpha}x+o(x).
\end{equation}

Our goal is to study the extremal behaviour of stochastic processes arising from such dynamical systems by considering an observable function that we will denote by $\varphi:[0,1]\to \R\cup\{+\infty\}$ and defining the process $X_0, X_1,\ldots$ by
\begin{equation}
\label{eq:SP}
X_n=\varphi \circ T_\alpha^n, \quad\mbox{for all $n\in\N_0$}
\end{equation}
where $T_\alpha^n$ denotes the $n$-fold composition of $T_\alpha$ and $T_\alpha^0$ is just the identity map. The $T_\alpha$ invariance of $\mu_\alpha$ guarantees that $X_0, X_1,\ldots$ is stationary.

\subsection{A dynamical emulation of Smith's example}
\label{subsec:smith-emulation}

In this case, we are going to use the idea introduced in \cite{AFFR16} to make a balanced mixture of a behaviour associated with an EI equal to 0 with the behaviour of an EI equal to 1. For that purpose we are going to consider that the observable function $\varphi$  will be maximised at two points, namely, the point $\zeta_1=0$, which is an indifferent fixed point, and a point $\zeta_2\in[1/2, 1]$, whose orbit never hits the maximal set $\mathcal C=\{\zeta_1,\zeta_2\}$, \ie
$f_{\alpha}^j(\zeta_2)\notin\mathcal C, \; \forall j\in\mathbb N.$ One could take for example the preperiodic point $\zeta_2\in[1/2, 1]$ such that $f(\zeta_2)=\xi$, where $\xi$ is the periodic point of period 2 on $[0,1/2]$. The observable function will be designed so that the chances of starting near $\zeta_1$ or $\zeta_2$ are equally weighed. Note that if $\mathcal C=\{\zeta_1\}$, by \cite[Theorem~2]{FFTV16}, we would have an EI equal to 0, while, by \cite[Theorem~1]{FFTV16}, if $\mathcal C=\{\zeta_2\}$, the EI would be equal to 1. In this case, we will obtain an EI equal to $1/2$ which is the mean of the two possible values. 

We take the following observable:
\begin{equation}
\label{def:observable1}
\varphi(x)=g(C_1 \dist(x,\zeta_1)^{1-\alpha})\I_{[0,\delta)}+g(2h_\alpha(\zeta_2)\dist(x,\zeta_2))\I_{(\zeta_2-\delta,\zeta_2+\delta)},
\end{equation}
for some $\delta>0$, where $\dist$ denotes any given metric on $[0,1]$ and the function $g:[0,+\infty)\rightarrow {\R\cup\{+\infty\}}$ is such that $0$ is a global maximum ($g(0)$ may be $+\infty$); $g$ is a strictly decreasing bijection $g:V \to W$
in a neighbourhood $V$ of $0$; and has one of the three types of behaviour described for example in \cite[Section~4.2.1]{LFFF16}, which are quite general and essential guarantee that we do not fall into a case of degeneracy of the limiting law for the partial maxima of the stochastic process $X_0,X_1,\ldots$.

We claim that with this particular choice of type of observable $\varphi$ then the process $X_0, X_1,\ldots$ has an EI that does not coincide with the reciprocal of the mean cluster size distribution of the limiting process of $N_n$ given in \eqref{def:Nn}.
\begin{theorem}
Consider a a map $T_\alpha$ defined in \eqref{def:LSV} for some $0<\alpha<\sqrt5-2$. Let $\varphi$ be as in \eqref{def:observable1} and consider the stochastic process $X_0,X_1,\ldots$ defined by \eqref{eq:SP}. This process admits an EI $\theta=\frac12$. Moreover, the point process $N_n$ defined by \eqref{def:Nn} for such stochastic process and for a sequence of levels $(u_n)_{n\in\N}$ satisfying \eqref{un} converges in distribution to a Poisson process $N$ defined on the positive real line with intensity $\theta\tau$.
\end{theorem}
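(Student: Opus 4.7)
The idea is to split every relevant event according to which maximum of $\varphi$ the initial condition is close to, analyse the two mechanisms separately and combine them via Theorem~\ref{thm:convergence-REPP}. The non-periodic point $\zeta_2$ will behave as in the classical uniformly hyperbolic non-periodic setting (isolated exceedances of mass $\sim\tau/(2n)$), while the indifferent fixed point $\zeta_1=0$ will produce many long clusters whose cluster-starting set has asymptotically negligible mass.

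First I would calibrate the threshold. By \eqref{def:observable1}, the set $\{X_0>u_n\}$ is the disjoint union of a left interval $[0,s_n)$ near $\zeta_1$ and an interval around $\zeta_2$; the scaling factors $C_1$ and $2h_\alpha(\zeta_2)$ are precisely tuned so that \eqref{eq:estimate-measure} near $\zeta_1$ and the Lipschitz regularity of $h_\alpha$ away from $0$ give $\mu_\alpha([0,s_n))\sim\mu_\alpha(\text{interval near }\zeta_2)\sim g^{-1}(u_n)$. Condition \eqref{un} then forces $g^{-1}(u_n)\sim\tau/(2n)$ and $s_n\sim(\tau/(2nC_1))^{1/(1-\alpha)}$, in particular $ns_n\to 0$. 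Next I would decompose $Q_q^{(0)}(u_n)$ in the same way. The interval around $\zeta_2$ lies entirely inside $Q_q^{(0)}(u_n)$ for $n$ large, since by hypothesis the orbit of $\zeta_2$ avoids $\mathcal C=\{\zeta_1,\zeta_2\}$, and this piece contributes $\sim\tau/(2n)$. Inside $[0,s_n)$ the left branch $T_\alpha(x)=x+2^\alpha x^{1+\alpha}$ is monotone with derivative close to $1$, so only points with $x\geq s_n-2^\alpha s_n^{1+\alpha}$ escape within one step (and once they escape they do not return), and \eqref{x_n,y_n} shows this piece has $\mu_\alpha$-mass $O(s_n)=o(1/n)$. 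Hence $\mu_\alpha(Q_q^{(0)}(u_n))\sim\tau/(2n)$ and $\theta_n\to 1/2=\theta$.

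To identify the multiplicity distribution I would show $\pi_n(\kappa)\to\I_{\{\kappa=1\}}$. The $\zeta_2$-component contributes nothing to $Q_q^{(\kappa)}(u_n)$ for $\kappa\geq 1$ by the same non-periodicity argument. Near $\zeta_1$, monotonicity of the left branch identifies $Q_q^{(\kappa-1)}(u_n)\cap[0,s_n)$ with the set of points whose escape time from $[0,s_n)$ equals exactly $\kappa$, which via the continuous-time approximation $\dot x=2^\alpha x^{1+\alpha}$ is an interval $[y_\kappa,y_{\kappa-1})$ with $y_j=(s_n^{-\alpha}+\alpha 2^\alpha j)^{-1/\alpha}$. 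Expanding $y_j^{1-\alpha}=s_n^{1-\alpha}-(1-\alpha)2^\alpha j s_n+O(s_n^{1+\alpha})$ and integrating the density $h_\alpha(x)\sim C_0 x^{-\alpha}$ give $\mu_\alpha([y_\kappa,y_{\kappa-1}))=C_0 2^\alpha s_n+O(s_n^{1+\alpha})$ with the same leading term for every fixed $\kappa$. Therefore $\mu_\alpha(Q_q^{(\kappa-1)}(u_n))-\mu_\alpha(Q_q^{(\kappa)}(u_n))=O(s_n^{1+\alpha})$, and dividing by $\mu_\alpha(Q_q^{(0)}(u_n))\sim\tau/(2n)$ yields $\pi_n(\kappa)=O(n s_n^{1+\alpha})=O(n^{-2\alpha/(1-\alpha)})\to 0$ for $\kappa\geq 2$; since $\pi_n$ is a probability distribution this forces $\pi_n(1)\to 1$, so $\pi=\delta_1$.

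To apply Theorem~\ref{thm:convergence-REPP} I still need conditions $\D_q(u_n)^*$ and $\D'_q(u_n)^*$. The mixing condition $\D_q(u_n)^*$ will follow by approximating the indicators of $Q_q^{(\kappa_1)}(u_n)$ and of the cylinders $\{N_n(I_j)=\kappa_j\}$ by H\"older functions and applying the polynomial decay \eqref{eq:Holder-DC}; the H\"older norm of a smoothed indicator of a set of diameter $\sim s_n$ grows like a negative power of $s_n$, and the numerical constraint $\alpha<\sqrt 5-2$ (equivalently $1/\alpha-1>\sqrt 5+1$) is precisely what permits the choice of $t_n=o(n)$ with $n\gamma(q,n,t_n)\to 0$. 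For $\D'_q(u_n)^*$ I would split the sum $n\sum_{j=q+1}^{\lfloor n/k_n\rfloor-1}\mu_\alpha(Q_q^{(0)}(u_n)\cap\shift^{-j}(U^{(0)}(u_n)))$ by whether $x$ and $T_\alpha^j(x)$ lie in $[0,s_n)$ or near $\zeta_2$; the three blocks involving the $\zeta_2$-ball are dispatched by \eqref{eq:Holder-DC}, while the critical $(\zeta_1,\zeta_1)$ block (orbits re-entering $[0,s_n)$ after escaping it) is treated by inducing on $[1/2,1]$ and using the return-time estimates developed for the same dynamics in \cite{FFTV16}. With both conditions in hand, Theorem~\ref{thm:convergence-REPP} gives weak convergence of $N_n$ to a compound Poisson process with intensity $\tau/2$ and multiplicity $\delta_1$, i.e.\ to the simple Poisson process in the statement. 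The hard part will be the $(\zeta_1,\zeta_1)$ block of $\D'_q(u_n)^*$: the slow polynomial mixing forces the induced Markov argument, and it is the combinatorics of this estimate that produces the precise threshold $\alpha<\sqrt 5-2$.
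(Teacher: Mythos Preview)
Your overall architecture matches the paper: calibrate $u_n$ so that the two components of $\{X_0>u_n\}$ each carry mass $\sim\tau/(2n)$, compute $\theta_n\to 1/2$, compute $\pi_n\to\delta_1$, verify $\D_q(u_n)^*$ and $\D'_q(u_n)^*$, and invoke Theorem~\ref{thm:convergence-REPP}. The calibration and the computation of $\theta$ are exactly as in the paper. For the multiplicity, your ODE approximation is more work than needed: the paper simply observes that the $\zeta_2$-ball contributes $\sim\tau/(2n)$ to $Q_q^{(0)}(u_n)$ but nothing to $Q_q^{(\kappa)}(u_n)$ for $\kappa\geq 1$, while the $\zeta_1$-contribution to every $Q_q^{(\kappa)}(u_n)$ is $O\bigl((\tau/2n)^{1/(1-\alpha)}\bigr)=o(1/n)$; hence $\pi_n(1)\to 1$ directly, without tracking the second-order cancellations you compute. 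Your $\D_q(u_n)^*$ paragraph also agrees with the paper.

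The plan for $\D'_q(u_n)^*$, however, has the decay tools misallocated and would not go through as written. You propose to handle the three blocks involving the $\zeta_2$-ball by the polynomial H\"older decay \eqref{eq:Holder-DC}, but that estimate carries a factor $\|\phi\|_{\mathcal H_\beta}$, and any H\"older approximation of $\I_{B_n}$ with $B_n$ of width $\sim 1/n$ has H\"older norm blowing up like a power of $n$; the resulting bound $n\sum_j C\|\phi_n\|_{\mathcal H_\beta}\, j^{-(1/\alpha-1)}$ does not vanish. The paper avoids this by splitting only according to the \emph{starting} point and using, for starts in $B_n\subset[1/2,1]$, the induced first-return map $F_\alpha$ on $[1/2,1]$, which is Rychlik and enjoys exponential decay of correlations of BV against $L^1$; since $\|\I_{B_n}\|_{BV}$ is bounded uniformly in $n$, this gives the required control. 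For starts in $A_n=[x_n,y_n)$ near $\zeta_1$, the paper does \emph{not} induce: it uses a direct pointwise Perron--Frobenius estimate $\sup_{x\in D_n\cup\tilde E_n} P_\alpha^j(\I_{A_n}h_\alpha)/h_\alpha \leq C n^{-1/(1-\alpha)}$, adapted from \cite{FFTV16}, which yields an $O\bigl(k_n^{-1} n^{-\alpha/(1-\alpha)}\bigr)$ bound with no restriction on $\alpha$ beyond $\alpha<1$. In particular, the constraint $\alpha<\sqrt 5-2$ comes entirely from the $\D_q(u_n)^*$ H\"older-approximation argument, not from $\D'_q(u_n)^*$ as your last sentence asserts.
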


\begin{remark}
Observe that the EI obtained $\theta=1/2$ does not coincide with the reciprocal of the mean of the cluster size of the limiting process $N$, which in this case is $1$ because it turns out that $N$ is actually a Poisson process. 
\end{remark}

\begin{remark}
Nevertheless, recall that by Theorem~\ref{thm:finite-mean} we still have that $\theta=1/2$ is the reciprocal of the limit of the mean cluster size of the finite time point process $N_n$, \ie $$\theta^{-1}=\lim_{n\to\infty}\sum_{\kappa=1}\kappa\pi_n(\kappa).$$
\end{remark}

In order to prove this theorem, we apply Theorem~\ref{thm:convergence-REPP}. To that end we need to check conditions $\D_q(u_n)^*$ and $\D'_q(u_n)^*$ which we leave for Sections~\ref{subsec:D} and \ref{subsec:D'}, respectively. We are left to prove that $\theta_n$ given in \eqref{eq:theta-n} converges to $\theta=1/2$ and the finite time cluster size distribution $\pi_n$ given by $\pi_n(\kappa)=\pi_{[u_n,\infty)}(\kappa)$ as in \eqref{def:pi-A} converges to a degenerate distribution $\pi$ such that $\pi(1)=0$ and $\pi(\kappa)=0$ for all $\kappa>1$.

Letting $B_\delta(\zeta_2)=(\zeta_2-\delta,\zeta_2+\delta)$, we note that 
\begin{align*}
&\{\varphi(x)>u\}=\left(\{x:C_1|x|^{1-\alpha}<g^{-1}(u)\}\cap[0,\delta)\right)\cup\left(\{x:2h_\alpha(\zeta_2)|x-\zeta_2|<g^{-1}(u)\}\cap B_\delta(\zeta_2)\right)\\
\\
&=\left(\left\{|x-\zeta_1|<\left(\frac{1}{C_1}g^{-1}(u)\right)^{\frac1{1-\alpha}}\right\}\cap[0,\delta)\right)\cup\left(\left\{|x-\zeta_2|<\frac{1}{2h_\alpha(\zeta_2)}g^{-1}(u)\right\}\cap B_\delta(\zeta_2)\right).
\end{align*}

Defining now $y_n=\left(\frac{1}{C_1}g^{-1}(u_n)\right)^{1/(1-\alpha)}$ and $\delta_n=\frac{1}{2h_\alpha(\zeta_2)}g^{-1}(u_n)$, we obtain
\begin{align*}
U_n:=U^{(0)}(u_n)=\{\varphi(x)>u_n\}=[0,y_n)\cup B_{\delta_n}(\zeta_2)
\end{align*}
and by \eqref{U_n} we have
\begin{align*}
\mu_{\alpha}(U_n)&
=\mu_{\alpha}  ([0,y_n)) + \mu_{\alpha} ([\zeta_2-\delta_n,\zeta_2+\delta_n])\\
&\sim C_1 \left(\left(\frac{1}{C_1}g^{-1}(u_n)\right)^{1/(1-\alpha)}\right)^{1-\alpha}+g^{-1}(u_n)\\
& \sim 2 g^{-1}(u_n)
\end{align*}

Let $\tau$ be such that 
\begin{equation}
\label{tau-n}
2g^{-1}(u_n(\tau))=\frac{\tau}{n}\quad\mbox{or equivalently}\quad u_n(\tau)=g\left(\frac{\tau}{2n}\right).
\end{equation}
%

In this case,
\[
Q_{p}^{(0)}(u_n)=[x_n,y_n)\cup [\zeta_2-\delta_n,\zeta_2+\delta_n].
\]

So, by \eqref{x_n,y_n} end \eqref{tau-n}, we obtain
\begin{equation}
\label{Qp0}
\mu_{\alpha}(Q_{p}^{(0)}(u_n))\sim c (1-\alpha)2^{\alpha}x_n+o(x_n)+\frac{\tau}{2n}.
\end{equation}

Since $\mu_{\alpha}  ([0,y_n)) \sim g^{-1}(u_n)=\frac{\tau}{2n}$, then, by \eqref{U_n}, we have that
$${C_1} ( x_n^{1-\alpha}+(1-\alpha)2^{\alpha}x_n+o(x_n))\sim \frac{\tau}{2n},$$
which implies that
\begin{equation}
\label{x_n}
x_n=O\left(\left(\frac{\tau}{2n}\right)^{1/(1-\alpha)}\right).
\end{equation}

Then, by \eqref{Qp0},
\[
\mu_{\alpha}(Q_{p}^{(0)}(u_n))=O\left(\left(\frac{\tau}{2n}\right)^{1/(1-\alpha)}\right)+\frac{\tau}{2n}.
\]
In this way we easily obtain
\[
\theta=\lim_{n\rightarrow +\infty}\frac{\frac{\tau}{2n}+O\left(\left(\frac{\tau}{2n}\right)^{1/(1-\alpha)}\right)}{\frac{\tau}{n}}=\frac{1}{2}.
\]

Recall that 
\begin{equation*}
\pi_n(k)=\frac{\mu_{\alpha}\left(Q_{p}^{(k-1)}(u_n)\right)-\mu_{\alpha}\left(Q_{p}^{(k)}(u_n)\right)}{\mu_{\alpha}\left(Q_{p}^{(0)}(u_n)\right)}.
\end{equation*}

In this case,
$
Q_{p}^{(1)}(u_n)=[x_n^{(1)},x_n),
$
where
$T_{\alpha}\left(x_n^{(1)}\right)=x_n$, \ie $x_n^{(1)}+2^{\alpha}\left(x_n^{(1)}\right)^{1+\alpha}=x_n$, which implies that $x_n^{(1)}=O(x_n)$.

Consequently, by \eqref{x_n} 
\[
\pi_n(1)=\frac{\mu_{\alpha}\left(Q_{p}^{(0)}(u_n)\right)-\mu_{\alpha}\left(Q_{p}^{(1)}(u_n)\right)}{\mu_{\alpha}\left(Q_{p}^{(0)}(u_n)\right)}=\frac{O\left(\left(\frac{\tau}{2n}\right)^{1/(1-\alpha)}\right)+\frac{\tau}{2n}-O\left(\left(\frac{\tau}{2n}\right)^{1/(1-\alpha)}\right)}{O\left(\left(\frac{\tau}{2n}\right)^{1/(1-\alpha)}\right)+\frac{\tau}{2n}},
\]
which goes to 1 as $n$ goes to $\infty$ and, therefore, we must have $\pi(1)=\lim_{n\to\infty}\pi_n(1)=1$ and $\pi(\kappa)=0$ for all $\kappa> 1$. In any case, we can also easily check that
\[
\pi_n(k)=\frac{\mu_{\alpha}\left(Q_{p}^{(k-1)}(u_n)\right)-\mu_{\alpha}\left(Q_{p}^{(k)}(u_n)\right)}{\mu_{\alpha}\left(Q_{p}^{(0)}(u_n)\right)}=\frac{O\left(\left(\frac{\tau}{2n}\right)^{1/(1-\alpha)}\right)}{O\left(\left(\frac{\tau}{2n}\right)^{1/(1-\alpha)}\right)+\frac{\tau}{2n}},
\]
which goes to 0 as $n$ goes to $\infty$.


\subsection{Dynamical counterexample with periodic behaviour}
\label{subsec:another-example}

As in the previous example we use a maximal set $\mathcal C=\{\zeta_1,\zeta_2\}$ consisting of two points, where $\zeta_1=0$ is again the indifferent fixed point while  $\zeta_2\in[1/2,1]$ is a periodic point, namely, for some $p\in\mathbb N$, we have $T_{\alpha}^p(\zeta_2)=\zeta_2\mbox{ and } T_{\alpha}^j(\zeta_2)\notin\{\zeta_1,\zeta_2\}, \; \forall j\in\{1, \ldots,p-1\}.$ As proved in \cite{FFT12}, if $\mathcal C=\{\zeta_2\}$, then we would have an EI $\theta=1-\gamma^{-1}$, where $\gamma=DT_\alpha^p(\zeta_2)$ is the derivative of $T^p_\alpha$ at $\zeta_2$. Hence, in this case, we are mixing an evenly weighed EI equal to $0$ with an EI equal to $1-\gamma^{-1}$. As we will prove, the EI in this counterexample will be again the average of the two, \ie $\theta=\frac12(1-\gamma^{-1})$, which will not coincide with the reciprocal of the mean cluster size of the limiting process.

We take, as in the previous example, the following observable:
\begin{equation}
\label{def:observable2}
\varphi(x)=g(C_1 \dist(x,\zeta_1)^{1-\alpha})\I_{[0,\delta)}+g(2h_\alpha(\zeta_2)\dist(x,\zeta_2))\I_{(\zeta_2-\delta,\zeta_2+\delta)},
\end{equation}
for some $\delta>0$ and $g$ as described above. In this case we also have a counterexample where the EI cannot be identified as the reciprocal of the mean limiting cluster size distribution.
\begin{theorem}
Consider a a map $T_\alpha$ defined in \eqref{def:LSV} for some $0<\alpha<\sqrt5-2$. Let $\varphi$ be as in \eqref{def:observable2} and consider the stochastic process $X_0,X_1,\ldots$ defined by \eqref{eq:SP}. This process admits an EI $\theta=\frac12(1-\gamma^{-1})$, where $\gamma=DT_\alpha^p(\zeta_2)$. Moreover, the point process $N_n$ defined by \eqref{def:Nn} for such stochastic process and for a sequence of levels $(u_n)_{n\in\N}$ satisfying \eqref{un} converges in distribution to a compound Poisson process $N$ defined on the positive real line with intensity $\theta\tau$ and multiplicity distribution given by
\begin{equation}
\label{eq:multiplicity-periodic-mix}
\pi(\kappa)=\gamma^{-(\kappa-1)}(1-\gamma^{-1}), \qquad\mbox{for all $\kappa\in\N$}.
\end{equation}
\end{theorem}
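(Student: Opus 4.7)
The proof will mirror the structure of the preceding theorem in Section~\ref{subsec:smith-emulation}: we apply Theorem~\ref{thm:convergence-REPP} with run length $q=p$, defer verification of the mixing conditions $\D_q(u_n)^*$ and $\D'_q(u_n)^*$ to the analogues of Sections~\ref{subsec:D} and \ref{subsec:D'}, and focus on computing the limits of $\theta_n$ and $\pi_n(\kappa)$. As before, writing $y_n=\left(\frac{1}{C_1}g^{-1}(u_n)\right)^{1/(1-\alpha)}$ and $\delta_n=\frac{g^{-1}(u_n)}{2h_\alpha(\zeta_2)}$, we have $U^{(0)}(u_n)=[0,y_n)\cup B_{\delta_n}(\zeta_2)$ with $\mu_\alpha(U^{(0)}(u_n))\sim 2g^{-1}(u_n)\sim \tau/n$, so the normalisation \eqref{tau-n} is unchanged.

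The key step is to split $Q_p^{(0)}(u_n)$ according to the two maximal points and use the periodicity of $\zeta_2$. Near the indifferent point $\zeta_1=0$, the contribution is $[0,y_n)\setminus\bigcup_{i=1}^{p}T_\alpha^{-i}(U^{(0)}(u_n))$; using that $T_\alpha^i(x)=x+i\,2^\alpha x^{1+\alpha}+o(x^{1+\alpha})$ for $x$ near $0$, an elementary estimate (as in \eqref{x_n,y_n}) gives that its measure is $O(y_n)=O\left((\tau/n)^{1/(1-\alpha)}\right)=o(1/n)$. Near the repelling periodic point $\zeta_2$, since $T_\alpha^p$ has multiplier $\gamma$ at $\zeta_2$, the preimage $T_\alpha^{-p}(B_{\delta_n}(\zeta_2))\cap B_{\delta_n}(\zeta_2)$ is asymptotically $B_{\delta_n/\gamma}(\zeta_2)$, and by the Lipschitz continuity of $h_\alpha$ away from $0$,
\begin{equation*}
\mu_\alpha\bigl(B_{\delta_n}(\zeta_2)\setminus T_\alpha^{-p}(B_{\delta_n}(\zeta_2))\bigr)\sim 2h_\alpha(\zeta_2)\,\delta_n(1-\gamma^{-1})=(1-\gamma^{-1})\frac{\tau}{2n}.
\end{equation*}
(The orbit segment $T_\alpha^i(\zeta_2)$, $i=1,\dots,p-1$, does not hit $\mathcal C$, so intermediate overlaps contribute negligibly.) Summing, $\mu_\alpha(Q_p^{(0)}(u_n))\sim(1-\gamma^{-1})\tau/(2n)$ and therefore $\theta_n\to\theta=\tfrac12(1-\gamma^{-1})$.

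For the multiplicity, iterate the same argument. A point in $B_{\delta_n}(\zeta_2)$ lies in $Q_p^{(\kappa)}(u_n)$ iff it returns to $B_{\delta_n}(\zeta_2)$ at the $p$-periodic times $p,2p,\dots,\kappa p$ and then fails to return during the next $p$ iterations, so by linearising $T_\alpha^p$ at $\zeta_2$,
\begin{equation*}
\mu_\alpha(Q_p^{(\kappa)}(u_n))\sim \gamma^{-\kappa}(1-\gamma^{-1})\frac{\tau}{2n}+o(1/n),
\end{equation*}
while the $\zeta_1$ contribution remains $o(1/n)$ (one checks that the points in $[0,y_n)$ that satisfy $\kappa$ controlled returns under $T_\alpha$ form a set whose measure is still $O(y_n)$). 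Plugging into \eqref{def:pi-A} yields
\begin{equation*}
\pi_n(\kappa)\longrightarrow \frac{\gamma^{-(\kappa-1)}(1-\gamma^{-1})-\gamma^{-\kappa}(1-\gamma^{-1})}{1-\gamma^{-1}}=\gamma^{-(\kappa-1)}(1-\gamma^{-1}),
\end{equation*}
matching \eqref{eq:multiplicity-periodic-mix}.

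The hard part will be the verification of $\D'_p(u_n)^*$. Using the polynomial decay of correlations \eqref{eq:Holder-DC} against Lipschitz approximations of $\I_{Q_p^{(0)}(u_n)}$ and $\I_{U^{(0)}(u_n)}$, one splits the sum in \eqref{eq:D'rho-un} into short-range returns (handled by the expansion of $T_\alpha^p$ near $\zeta_2$ together with the escape-rate estimates for the Manneville--Pomeau map from \cite{FFTV16}) and long-range returns (handled by \eqref{eq:Holder-DC}). The Lipschitz seminorm of the smoothed indicators is of order $1/\delta_n\sim n$; balancing this with the decay rate $t^{-(1/\alpha-1)}$ across a block of length $n/k_n$ produces precisely the restriction $\alpha<\sqrt5-2$, just as in the previous counterexample. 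Condition $\D_p(u_n)^*$ is an immediate consequence of the same decay-of-correlations estimate.
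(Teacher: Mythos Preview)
Your computation of $\theta_n\to\tfrac12(1-\gamma^{-1})$ and of $\pi_n(\kappa)\to\gamma^{-(\kappa-1)}(1-\gamma^{-1})$ follows the paper's argument essentially verbatim, and your handling of $\D_p(u_n)^*$ also matches: this condition is the one obtained directly from \eqref{eq:Holder-DC} via Lipschitz approximation, and it is exactly here that the restriction $\alpha<\sqrt5-2$ enters.

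The gap is in your proposed verification of $\D'_p(u_n)^*$. The estimate \eqref{eq:Holder-DC} is a H\"older--$L^\infty$ bound, so the second observable contributes $\|\I_{U^{(0)}(u_n)}\|_\infty=1$, not $\mu_\alpha(U^{(0)}(u_n))\sim\tau/n$. With the Lipschitz norm of the smoothed first indicator of order $n$ (and in fact of order $n^{1/(1-\alpha)}$ for the piece $[x_n,y_n)$ near $\zeta_1=0$), the correlation term in each summand is of order $n\,j^{-(1/\alpha-1)}$; summing in $j$ and multiplying by the outer factor $n$ gives a bound that blows up like $n^2$, regardless of the choice of $k_n$. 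The ``balancing'' you describe is the calculation for $\D_p(u_n)^*$, where one only needs $n\gamma(q,n,t_n)\to0$ for a \emph{single} gap $t_n$; it does not transfer to the sum in \eqref{eq:D'rho-un}.

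The paper's route for $\D'_p(u_n)^*$ is different in kind. It splits $Q_p^{(0)}(u_n)$ into its pieces $A_n\subset[0,1/2]$ and $B_n\subset[1/2,1]$. For points starting in $B_n$ it passes to the first-return map $F_\alpha$ on $[1/2,1]$, a Rychlik map with exponential BV--$L^1$ decay of correlations; the $L^1$ factor supplies the missing $\mu_\alpha$-measure of the target set, and together with the fact that the first return time from $B_n$ to $D_n\cup\tilde E_n$ diverges this makes the sum vanish. For points starting in $A_n$ the paper does not use decay of correlations at all, but pointwise transfer-operator estimates $P_\alpha^j(\I_{A_n}h_\alpha)/h_\alpha\le C\,n^{-1/(1-\alpha)}$ in the style of \cite[Section~4.2.1]{FFTV16}. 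Neither step relies on \eqref{eq:Holder-DC}, and the argument imposes no restriction on $\alpha$ beyond $\alpha\in(0,1)$; the constraint $\alpha<\sqrt5-2$ comes solely from $\D_p(u_n)^*$.
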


\begin{remark}
Observe that the EI obtained $\theta=\frac12(1-\gamma^{-1})$ does not coincide with the reciprocal of the mean of the cluster size of the limiting process $N$, which in this case is 
$$\sum_{\kappa=1}\kappa\pi(\kappa)=\sum_{\kappa=1}^\infty \kappa \gamma^{-(\kappa-1)}(1-\gamma^{-1})=\frac1{1-\gamma^{-1}}.$$\end{remark}

\begin{remark}
As in the previous example, recall that by Theorem~\ref{thm:finite-mean} we still have that $\theta=\frac12(1-\gamma^{-1})$ is the reciprocal of the limit of the mean cluster size of the finite time point process $N_n$, \ie 
$$\theta^{-1}=\lim_{n\to\infty}\sum_{\kappa=1}\kappa\pi_n(\kappa).$$
\end{remark}

Again, in order to prove this theorem, we apply Theorem~\ref{thm:convergence-REPP}. To that end we need to check conditions $\D_q(u_n)^*$ and $\D'_q(u_n)^*$ which we leave for Sections~\ref{subsec:D} and \ref{subsec:D'}, respectively. We are left to prove that $\theta_n$ given in \eqref{eq:theta-n} converges to $\theta=\frac12(1-\gamma^{-1})$ and the finite time cluster size distribution $\pi_n$ given by $\pi_n(\kappa)=\pi_{[u_n,\infty)}(\kappa)$ as in \eqref{def:pi-A} converges to $\pi$ given in \eqref{eq:multiplicity-periodic-mix}.

As in the previous example, 
defining $y_n=\left(\frac{1}{C_1}g^{-1}(u_n)\right)^{1/(1-\alpha)}$ and $\delta_n=\frac{1}{2h_\alpha(\zeta_2)}g^{-1}(u_n)$, we obtain
\begin{align*}
U_n:=U^{(0)}(u_n)=\{\varphi(x)>u_n\}=[0,y_n)\cup (\zeta_2-\delta_n,\zeta_2+\delta_n)
\quad\mbox{and}\quad
\mu_{\alpha}(U_n)
\sim 2 g^{-1}(u_n).
\end{align*}

Again, we let $\tau$ to be as in \eqref{tau-n}.
In this case,
\[
Q_{p}^{(0)}(u_n)=[x_n,y_n)\cup (B_{\delta_n}(\zeta_2)\setminus T_\alpha^{-p}(B_{\delta_n}(\zeta_2))),
\]
where as before $B_{\delta_n}(\zeta_2)=(\zeta_2-\delta_n,\zeta_2+\delta_n)$.

So, by \eqref{x_n,y_n} and \eqref{tau-n}, we obtain
\begin{equation}
\label{Qp0_}
\mu_{\alpha}(Q_{p}^{(0)}(u_n))\sim c (1-\alpha)2^{\alpha}x_n+o(x_n)+\frac{\tau}{2n}(1-\gamma^{-1}).
\end{equation}

As we have seen in the previous example, $\mu_{\alpha}  ([0,y_n)) \sim g^{-1}(u_n)=\frac{\tau}{2n}$, and then, by \eqref{U_n}, we have that
$${C_1} ( x_n^{1-\alpha}+(1-\alpha)2^{\alpha}x_n+o(x_n))\sim \frac{\tau}{2n}.$$ 
Hence,
\begin{equation}
\label{x_n_}
x_n=O\left(\left(\frac{\tau}{2n}\right)^{1/(1-\alpha)}\right).
\end{equation}
Then, by \eqref{Qp0_},
\begin{equation}
\label{Qp0e}
\mu_{\alpha}(Q_{p}^{(0)}(u_n))=O\left(\left(\frac{\tau}{2n}\right)^{1/(1-\alpha)}\right)+\frac{\tau}{2n}(1-\gamma^{-1}).
\end{equation}
Gathering this information, we obtain
\[
\theta=\lim_{n\rightarrow +\infty}\frac{\frac{\tau}{2n}(1-\gamma^{-1})+O\left(\left(\frac{\tau}{2n}\right)^{1/(1-\alpha)}\right)}{\frac{\tau}{n}}=\frac{1}{2}(1-\gamma^{-1}).
\]
%
%
We compute now the multiplicity distribution. Observe that
\[
Q_{p}^{(1)}(u_n)=[x_n^{(1)},x_n)\cup (B_{\delta_n}(\zeta_2)\cap T_\alpha^{-p}(B_{\delta_n}(\zeta_2))\setminus T_\alpha^{-2p}(B_{\delta_n}(\zeta_2))),
\]
where
$T_{\alpha}\left(x_n^{(1)}\right)=x_n$, that is, $x_n^{(1)}+2^{\alpha}\left(x_n^{(1)}\right)^{1+\alpha}=x_n$, which implies that $x_n^{(1)}=O(x_n)$. Hence,
\begin{equation}
\label{pi_n2}
\mu_{\alpha}\left(Q_{p}^{(1)}(u_n)\right)=O\left(\left(\frac{\tau}{2n}\right)^{1/(1-\alpha)}\right)+\frac{\tau}{2n}\gamma^{-1}(1-\gamma^{-1})
\end{equation}

Consequently, by \eqref{Qp0e} and \eqref{pi_n2}, we have
\begin{align*}
\pi(1)&=\lim_{n\to\infty}\pi_n(1)=\lim_{n\to\infty}\frac{\mu_{\alpha}\left(Q_{p}^{(0)}(u_n)\right)-\mu_{\alpha}\left(Q_{p}^{(1)}(u_n)\right)}{\mu_{\alpha}\left(Q_{p}^{(0)}(u_n)\right)}\\
&=\lim_{n\to\infty}\frac{\frac{\tau}{2n}(1-\gamma^{-1})-\frac{\tau}{2n}\gamma^{-1}(1-\gamma^{-1})+O\left(\left(\frac{\tau}{2n}\right)^{1/(1-\alpha)}\right)}{\frac{\tau}{2n}(1-\gamma^{-1})+O\left(\left(\frac{\tau}{2n}\right)^{1/(1-\alpha)}\right)}\\
&=1-\gamma^{-1}
\end{align*}
In order to compute $\pi(2)$ we need to estimate $\mu_{\alpha}\left(Q_{p}^{(2)}(u_n)\right)$, which we do by noting
\[
Q_{p}^{(2)}(u_n)=[x_n^{(2)},x_n^{(1)})\cup (B_{\delta_n}(\zeta_2)\cap T_\alpha^{-2p}(B_{\delta_n}(\zeta_2))\setminus T_\alpha^{-3p}(B_{\delta_n}(\zeta_2))),
\]
where
$T_{\alpha}\left(x_n^{(2)}\right)=x_n^{(1)}$, \ie $x_n^{(2)}+2^{\alpha}\left(x_n^{(2)}\right)^{1+\alpha}=x_n^{(1)}$, which implies that $x_n^{(2)}=O(x_n^{(1)})=O(x_n)$. Thus,
\begin{equation}
\label{pi_n_2}
\mu_{\alpha}\left(Q_{p}^{(2)}(u_n)\right)=O\left(\left(\frac{\tau}{2n}\right)^{1/(1-\alpha)}\right)+\frac{\tau}{2n}\gamma^{-2}(1-\gamma^{-1})
\end{equation}

Consequently, by \eqref{Qp0e}, \eqref{pi_n2} and \eqref{pi_n_2}, 
\begin{align*}
\pi(2)&=\lim_{n\to\infty}\pi_n(2)=
\lim_{n\to\infty}\frac{\frac{\tau}{2n}\gamma^{-1}(1-\gamma^{-1})-\frac{\tau}{2n}\gamma^{-2}(1-\gamma^{-1})+O\left(\left(\frac{\tau}{2n}\right)^{1/(1-\alpha)}\right)}{\frac{\tau}{2n}(1-\gamma^{-1})+O\left(\left(\frac{\tau}{2n}\right)^{1/(1-\alpha)}\right)}\\
&=\gamma^{-1}-\gamma^{-2}=\gamma^{-1}(1-\gamma^{-1}).
\end{align*}
A simple inductive argument then leads to
\begin{align*}
\pi(\kappa)&=\lim_{n\to\infty}\pi_n(\kappa)\pi_n(\kappa)=\frac{\mu_{\alpha}\left(Q_{p}^{(\kappa-1)}(u_n)\right)-\mu_{\alpha}\left(Q_{p}^{(\kappa)}(u_n)\right)}{\mu_{\alpha}\left(Q_{p}^{(0)}(u_n)\right)}\\
&=\lim_{n\to\infty}\frac{\frac{\tau}{2n}\gamma^{-(\kappa-1)}(1-\gamma^{-1})-\frac{\tau}{2n}\gamma^{-\kappa}(1-\gamma^{-1})+O\left(\left(\frac{\tau}{2n}\right)^{1/(1-\alpha)}\right)}{\frac{\tau}{2n}(1-\gamma^{-1})+O\left(\left(\frac{\tau}{2n}\right)^{1/(1-\alpha)}\right)}\\
&=\gamma^{-(\kappa-1)}-\gamma^{-\kappa}=\gamma^{\kappa-1}(1-\gamma^{-1}).
\end{align*}
for all $\kappa \in\N$.


\subsection{The condition $\D_q(U_n)^*$}
\label{subsec:D}
Condition $\D_q(U_n)^*$ has been designed to be easily verified for systems with sufficiently fast decay of correlations. The argument used in \cite[Section~4.2.2]{FFTV16} allows to show that $\D_q(U_n)^*$ follows from the decay of correlations stated in \eqref{eq:Holder-DC}, as long as $\alpha<\sqrt5-2$.

\subsection{The condition $\D'_q(U_n)^*$}
\label{subsec:D'}

This subsection is dedicated to the verification of condition $\D'_q(U_n)^*$. We need to check \eqref{cond:D'q}. We will split the argument into two parts. In the first part we consider the points from $Q^{(0)}(u_n)$ that belong to a neighbourhood of $\zeta_2$ and in the second part the points from $Q^{(0)}(u_n)$ that belong to a neighbourhood of $\zeta_1$. Let $A_n=Q^{(0)}(u_n)\cap [0,1/2]$ and $B_n=Q^{(0)}(u_n)\cap [1/2,1].$ 
\subsubsection{Starting in a neighbourhood of $\zeta_2$}
We begin with the points that start in $B_n$. It is well known that the map $T_\alpha$ admits a first return time map $F_\alpha:[1/2,1]\to[1/2,1]$ given by $F_\alpha(x)=T_\alpha^{r_B(x)}(x)$, where $B=[1/2,1]$ and $r_B:B\to\N$ is the first return time to $B$, \ie $r_B(x)=\inf\{j\in\N: f_\alpha^j(x)\in B\}$. The map $F_\alpha$ has $\bar\mu_\alpha=\mu_\alpha|B$ as an invariant probability measure, is piecewise expanding and in particular qualifies as Rychlik map. Therefore, $F_\alpha$ has a strong form of decay of correlations, namely, there exist $C,a>0$ such that for all bounded variation functions $\phi$ against all $L^1$ functions $\psi$ we have
\begin{equation}
\label{eq:BV-L1-DC}
\left| \int\phi\cdot(\psi\circ F_\alpha^t)d\bar\mu_\alpha-\int\phi d\bar\mu_\alpha\int\psi
d\bar\mu_\alpha\right|\leq C\|\phi\|_{BV}\|\psi\|_1 \e^{-at}.
\end{equation}
Let $D_n=U^{(0)}(u_n)\cap B$, $E_n=U^{(0)}(u_n)\setminus D_n$ and $\tilde E_n=T_\alpha^{-1}(E_n)\cap B$. We observe that if $x\in B_n \cap T_\alpha^{-j}D_n$ then there exists $i\leq j$ such that $x\in  B_n \cap F_\alpha^{-i}D_n$ and, moreover, if $x\in B_n \cap T_\alpha^{-j}E_n$ then there exists $i\leq j-1$ such that $x\in  B_n \cap F_\alpha^{-i}\tilde E_n$. Therefore, 
$$
n\sum_{j=q+1}^{\lfloor n/k_n\rfloor-1}\bar\mu_\alpha\left( B_n\cap T_\alpha^{-j}(U^{(0)}(u_n))\right)\leq n\sum_{j=q+1}^{\lfloor n/k_n\rfloor-1}\bar\mu_\alpha\left( B_n\cap F_\alpha^{-j}(D_n\cup\tilde E_n)\right).
$$
We will use decay of correlations against $L^1$ of the first return time induced map $F_\alpha$ to estimate the last quantity on the right. Let $R_n=\inf\{j\in\N:B_n\cap F_\alpha^{-j}(D_n\cup\tilde E_n)\neq\emptyset\}$. In both examples described in sections \ref{subsec:smith-emulation} and \ref{subsec:another-example}, we have that $R_n\to\infty$ as $n\to \infty$. In the first situation, this follows since $B_n=D_n$ get arbitrarily small and close to $\zeta_2$, while $E_n$ gets arbitrarily small and close to $\zeta_1$ and the orbit of $\zeta_2$ does not hit $\mathcal C=\{\zeta_1,\zeta_2\}$. In the second situation, it follows because $B_n$, $D_n$ get arbitrarily small and close to $\zeta_2$, while $E_n$ gets arbitrarily small and close to $\zeta_1$ and since $\zeta_2$ is a repelling periodic point, by construction of $B_n$ its points take an arbitrarily increasing amount of time before having the opportunity to return to $D_n$. Using this observation, \eqref{eq:BV-L1-DC}, with $\phi=\I_{B_n}$ and $\psi=\I_{D_n\cap\tilde E_n}$, the facts that $\bar\mu_\alpha(D_n\cup\tilde E_n)=O\left(n^{-\frac1{1-\alpha}}\right)+O\left(n^{-1}\right)=O\left(n^{-1}\right)$ and $\|\phi\|_{BV}\leq 6$, we have
\begin{align*}
n\sum_{j=q+1}^{\lfloor n/k_n\rfloor-1}&\bar\mu_\alpha\left( B_n\cap F_\alpha^{-j}(D_n\cup\tilde E_n)\right)=n\sum_{j=R_n}^{\lfloor n/k_n\rfloor-1}\bar\mu_\alpha\left( B_n\cap F_\alpha^{-j}(D_n\cup\tilde E_n)\right)\\
&\leq n \sum_{j=R_n}^{\lfloor n/k_n\rfloor-1}\bar\mu_\alpha(B_n)\bar\mu_\alpha(D_n\cup\tilde E_n)+6Cn\bar\mu_\alpha(D_n\cup\tilde E_n)\sum_{j=R_n}^{\lfloor n/k_n\rfloor-1}\e^{-aj}\\
&\leq O\left(\frac1{k_n}\right)+O\left(\sum_{j=R_n}^{\infty}\e^{-aj}\right)\xrightarrow[n\to\infty]{}0.
\end{align*}

\subsubsection{Starting in a neighbourhood of $\zeta_1$}
Using the notation above, we start now with points in $A_n$. Observe that by definition of $A_n$ and the properties of $T_\alpha$, a point of $x\in A_n$ can only return to $U^{(0)}(u_n)$ after hitting the set $B$. Then if it hits $D_n$ it returns to $U^{(0)}(u_n)$ or if it hits $\tilde E_n$, it will return in the following iterate. Otherwise, if it hits $B\setminus(D_n\cup \tilde E_n)$, we must wait until its orbit hits $B$ again to have another chance of returning to $U^{(0)}(u_n)$. Hence, in order to check \eqref{cond:D'q}, we need to estimate 
$$
n\sum_{j=q+1}^{\lfloor n/k_n\rfloor-1}\mu_\alpha\left( A_n\cap T_\alpha^{-j}(D_n\cup\tilde E_n)\right)=n\sum_{j=R_n}^{\lfloor n/k_n\rfloor-1}\mu_\alpha\left( A_n\cap T_\alpha^{-j}(D_n\cup\tilde E_n)\right),
$$
where $R_n=\inf\{j\in\N:A_n\cap T_\alpha^{-j}(D_n\cup\tilde E_n)\neq\emptyset\}$.
Let $P_\alpha:L^1(\mbox{Leb})\to L^1(\mbox{Leb})$ denote the transfer or Perron-Frobenius operator given by duality from the equation 
$$
\int \phi\cdot\psi\circ T_\alpha\, d x=\int P_\alpha(\phi)\cdot \psi\, dx,
$$
where $\phi \in L^1(\mbox{Leb})$ and $\psi \in L^\infty(\mbox{Leb})$. Now, recalling that $h_\alpha> 0$, we have 
\begin{align*}
\mu_\alpha&\left( A_n\cap T_\alpha^{-j}(D_n\cup\tilde E_n)\right)=\int \I_{A_n}\cdot\I_{D_n\cup \tilde E_n}\circ T_\alpha^j\cdot h_\alpha\, dx =\int P^j_\alpha(\I_{A_n}h_\alpha)\cdot\I_{D_n\cup \tilde E_n}\, dx\\
&=\int \frac{P^j_\alpha(\I_{A_n}h_\alpha)}{h_\alpha}\cdot\I_{D_n\cup \tilde E_n}\cdot h_\alpha\, dx\leq \mu_\alpha(D_n\cup \tilde E_n)\sup_{x\in D_n\cup \tilde E_n}\frac{P^j_\alpha(\I_{A_n}h_\alpha)}{h_\alpha}.
\end{align*}
 Following now the same argument used in \cite[Section~4.2.1]{FFTV16} to estimate $\frac{P^j_\alpha(\I_{A_n}h_\alpha)}{h_\alpha}$, with the necessary adjustments (note that here $A_n=[x_n,y_n)$ where $x_n\sim_{c}\frac1{n^{1/(1-\alpha)}}$ while in \cite[Section~4.2.1]{FFTV16} $x_n$ was such that $x_n\sim_{c}\frac1{n}$) we obtain for some $C>0$, 
 $$
 \frac{P^j_\alpha(\I_{A_n}h_\alpha)}{h_\alpha} \leq C \frac{1}{n^{1/(1-\alpha)}}.
 $$
Therefore, recalling that $\mu_\alpha(D_n\cup\tilde E_n)=O\left(n^{-1}\right)$, we have 
$$
n\sum_{j=R_n}^{\lfloor n/k_n\rfloor-1}\mu_\alpha\left( A_n\cap T_\alpha^{-j}(D_n\cup\tilde E_n)\right)\leq n\frac n{k_n}\mu_\alpha(D_n\cup \tilde E_n)\frac{C}{n^{1/(1-\alpha)}}=O\left(\frac1{k_n n^{\alpha/(1-\alpha)}}\right)\xrightarrow[n\to\infty]{}0.
$$

\section{Escape of mass}
\label{sec:escape-of-mass}

We note that, in the counterexamples that we built, there exists an escape of mass, which is responsible for difference between the mean of the finite time cluster size distribution (associated to the point process $N_n$) and the mean of the limiting cluster size distribution (associated to the limiting process $N$). The loss of mass can be immediately detected by looking at the average number of rare events (exceedances of $u_n(\tau)$) recorded by both $N_n$ and $N$. Indeed, observe that $\E(N_n([0,1))=\tau$, where $\tau$ is given by \eqref{un}, while $\E(N([0,1))=\frac12\tau$, in the case considered in Section~\ref{subsec:smith-emulation}, and  $\E(N([0,1))=\frac12(1-\gamma^{-1})\tau$, in the case considered in Section~\ref{subsec:another-example} (recall that $\gamma>1$). This means that the limiting processes have lost half of the mass relative to extremal events detected, in the first case, and more than half, in the second case.

The main goal of this section is to try to provide an explanation for the question: how did mass disappear?

We consider two dimensional point processes as studied in \cite{FFM17a}, namely,
\begin{equation}
N_n^{(2)}=\sum_{j=0}^\infty \delta_{\left(j/n,u_n^{-1}(X_j)\right)}
\end{equation} 
We are assuming that for each $n\in\N$, the threshold function $u_n(\tau)$ is continuous and strictly decreasing in $\tau$. We can define the inverse function $u_n^{-1}$. This function can be thought of as the asymptotic frequency associated to a given threshold on the. range of the r.v. $X_0$. This point process is defined on the bi-dimensional space $E^2=[0,+\infty)\times [0,+\infty)$ and keeps record both of the times of occurrence of events and also of their severity, in the sense that a point with a vertical coordinate close to $0$ corresponds to a severe or abnormally high observation (whose corresponding asymptotic frequency is very low, \ie very few exceedances of the corresponding threshold are expected). 

The weak convergence of these point processes is a very powerful tool to obtain other results such as convergence of record point processes extremal processes, limiting laws for the maxima, which can all be settled very easily through the continuous mapping theorem and a suitable projection (see \cite{R87}, for example). In particular, note that if we define $H_{\tau}:E^2\to E$, by $H_\tau(t,y)=t\cdot\I_{[0,\tau)}(y)$ then $H_\tau(N_n^{(2)})=N_n$.

\subsection{The regular periodic case when the EI is the reciprocal of the mean limiting cluster size distribution}

In order to understand how the mass escapes, we are going to consider first the usual case where the EI coincides with the reciprocal of the cluster size distribution. Suppose that the observable $\varphi:[0,1]\to \R\cap\{+\infty\}$ is maximised at a single periodic point $\zeta\in[1/2,1]$, \ie 
\begin{equation}
\label{eq:observable-periodic}
\varphi(x)=g(|x-\zeta|),
\end{equation} where $g$ is as above and, for definiteness $\zeta$ is the periodic point of period 2 sitting on $[1/2,1]$. Let $\gamma=DT_\alpha(T_\alpha(\zeta))\cdot DT_\alpha(\zeta)$. 
Considering a stochastic process $X_0, X_1, \ldots$ defined as in \ref{eq:SP} for such $\varphi$ and given a sequence $(u_n(\tau))_{n\in\N}$ as in \eqref{un}, by \cite{FFT13}, we have that there exists an EI $\theta=(1-\gamma^{-1})$ and $N_n$ given in \eqref{def:Nn} converges to a compound Poisson process $N$ of intensity $\theta\tau$ with a geometric cluster size distribution, \ie $\pi(\kappa)=\p(D_i=\kappa)=\theta(1-\theta)^{\kappa-1}$. Moreover, by \cite{FFM17a}, we have that $N_n^{(2)}$ converges weakly to 
\begin{equation}
\label{eq:repelling-limit-REPP-2d}
N^{(2)}=\sum_{i,j=1}^\infty\sum_{\ell=0}^{\infty} \delta_{(T_{i,j},\,\gamma^\ell\cdot U_{i,j})},
\end{equation}
where the matrices $(T_{i,j})_{i,j\in\N}$ and $(U_{i,j})_{i,j\in\N}$ are mutually independent and obtained in the following way.
%
%
 Let $(W_{i,j})_{i,j\in\N}$ be a matrix of iid r.v. with common $\text{Exp}(\theta)$ distribution and consider $(T_{i,j})_{i,j\in\N}$ given by: 
$T_{i,j}=\sum_{\ell=1}^j W_{i,\ell}.$ 
Note that the rows of $(T_{i,j})_{i,j\in\N}$ are independent. Let $(U_{i,j})_{i,j\in\N}$ be a matrix of independent r.v. such that, for all $j\in\N$, the r.v. $U_{i,j}\stackrel[]{D}{\sim} \mathcal U_{(i-1,i]}$, \ie $U_{i,j}$ has a uniform distribution on the interval $(i-1,i]$. 

The point process $N^{(2)}$ can be described in the following way, first one obtains the points of bi-dimensional Poisson process on $E^2$ with $\theta\cdot\mbox{Leb}$ as its intensity measure, where $\theta\cdot\mbox{Leb}([a,b)\times[c,d))=\theta(b-a)(d-c)$, and then for every such point created we put a vertical pile of points above it, such that the distance to the original point follows a geometric law, namely, their second coordinate is the original one multiplied by a power of $\gamma$. The idea is that the observations within a cluster in $N_n^{(2)}$ appear closer and closer in time and as $n$ goes to $\infty$, eventually, they get to be aligned on the same vertical line for $N^{(2)}$. On the other hand, the dynamics near $\zeta$ tell us that if an orbit enters a very close neighbourhood of $\zeta$ then it gets repelled away at a rate given by $\gamma$, which explains the vertical distribution of the points. To be more precise, we note that $u_n^{-1}(z)\sim n2h_\alpha(\zeta)g^{-1}(z)$. Now, say that $X_j$ is so large that $u_n^{-1}(X_j)=1$, which means that the point $(j/n,1)$ is charged by the point process $N_n^{(2)}$. Then $|T_\alpha^j(x)-\zeta|\sim \frac1{2h_\alpha(\zeta)n}$. Since $DT^2_\alpha(\zeta)=\gamma$, then for large $n$ it follows that $|T_\alpha^{j+2}(x)-\zeta|\sim \frac{\gamma}{2h_\alpha(\zeta)n}$, $|T_\alpha^{j+4}(x)-\zeta|\sim \frac{\gamma^2}{2h_\alpha(\zeta)n}$ and so forth. Recalling that the points  $(\frac{j+2}{n},u_n^{-1}(X_{j+2})), (\frac{j+4}{n},u_n^{-1}(X_{j+4})),\ldots$ will also be charged by $N_n^{(2)}$ and since by the previous computations and the form of $\varphi$ we have $u_n^{-1}(X_{j+2})\sim \gamma, u_n^{-1}(X_{j+4})\sim\gamma^2,\ldots$, then one realises that, in the limit process $N^{(2)}$, these cluster points get vertically aligned and distributed according to the powers of $\gamma$.

Also observe that $H_\tau(N^{(2)})=N$. In fact, exceedances of the level $u_n(\tau)$ correspond to points with second coordinate less than $\tau$ and the cluster size can be easily interpreted as the number of points in each vertical pile still below the threshold $\tau$ that project on the same time event. See Figure~\ref{fig:process}.
\begin{figure}[h]
\includegraphics[height=9cm]{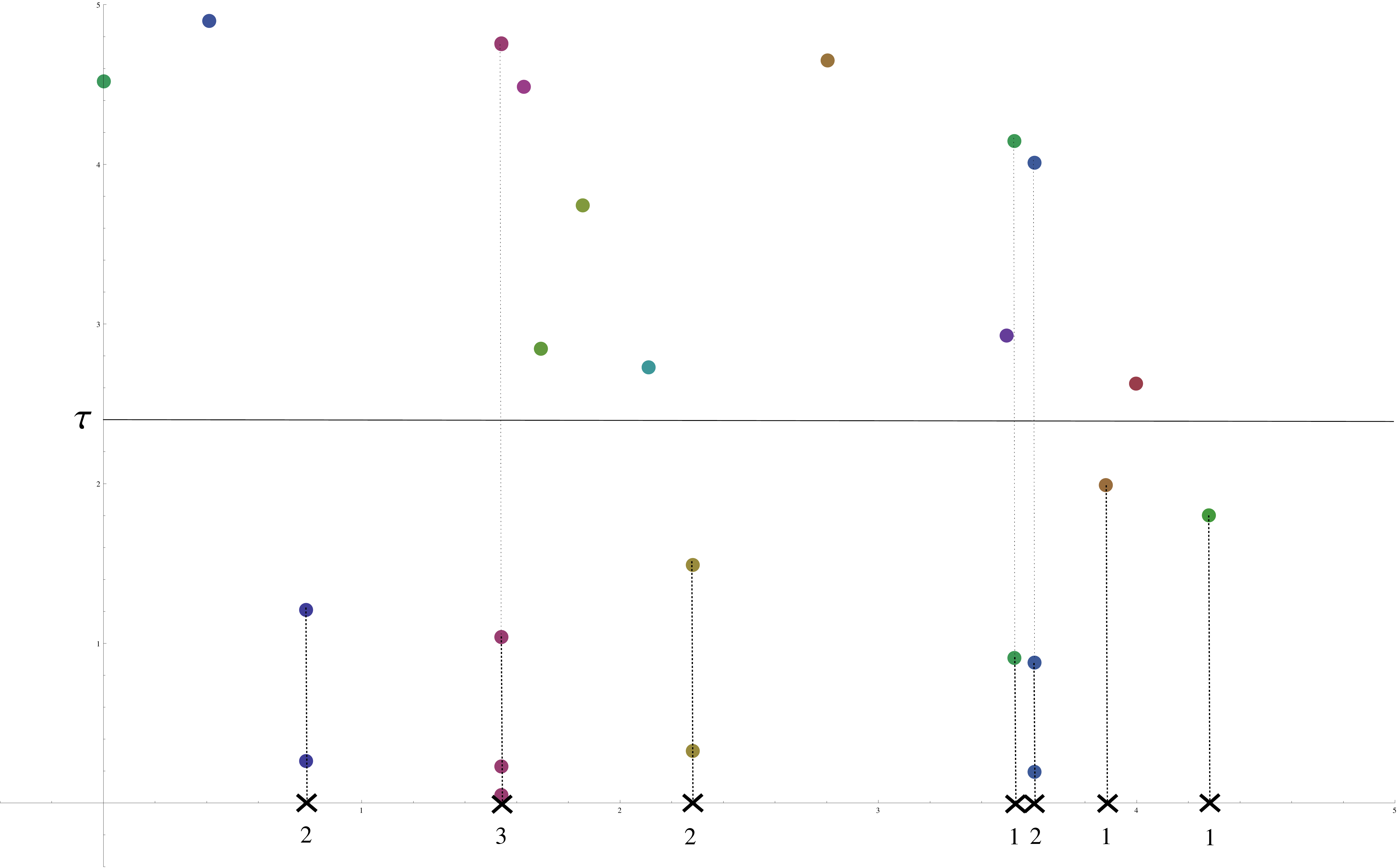}
\caption{Simulation of the bi-dimensional process $N^{(2)}$ in the case of the periodic point where the observable $\varphi$ is given by \ref{eq:observable-periodic}. The picture also shows how the projection $H_\tau$ works to obtain $N=H_\tau(N^{(2)})$, which is a compound Poisson process on the line, where the crosses represent the Poisson time events and the numbers below them the respective multiplicity (cluster size).}
\label{fig:process}
\end{figure}

\subsection{The dynamical counterexample with no periodicity mixed with the indifferent fixed point}
Assume now that the observable $\varphi$ is given as in \eqref{def:observable1}. In this case the limiting process is a bi-dimensional Poisson process with intensity measure $\tfrac12\cdot\mbox{Leb}$, which can be written as:
$$
N^{(2)}=\sum_{i,j=1}^\infty \delta_{(T_{i,j},\,U_{i,j})},
$$
where $T_{i,j}$ and $U_{i,j}$ are as above with $\theta=1/2$. Note that in this case there are no vertical piles of points as before. However, the process $N_n^{(2)}$ does have clustering points. There are two phenomena that help to explain how do they disappear.

On one hand, if we consider that $X_j$ is a very large observation that results from the orbit entering a very small vicinity of $0$ at time $j$. From \eqref{tau-n}, we have that $u_n(\tau)=g(\tau/(2n))$, which implies that $u_n^{-1}(z)=2ng^{-1}(z)$. For definiteness, let us assume that $u_n^{-1}(X_j)=1$, which means that the point $(j/n,1)$ is charged by the point process $N_n^{(2)}$. Moreover, since $\zeta_1=0$ is an indifferent fixed point then the orbit will linger around $0$ for a long time which creates clustering and the points  $\left(\frac{j+1}{n}, u_n^{-1}(X_{j+1})\right), \left(\frac{j+2}{n}, u_n^{-1}(X_{j+2})\right),\ldots$, which are also charged by $N_n^{(2)}$, will still be close to $(j/n,1)$. As in the previous example, the points on the same cluster will end up vertically aligned because of the horizontal contraction caused by the normalisation consisting on dividing by $n$.  However, in this case, something interestingly different occurs in the vertical direction. Namely, since $X_j\sim g(C_1(T_\alpha(x))^{1-\alpha})$, then $u_n^{-1}(X_j)\sim2nC_1(T_\alpha^j(x))^{1-\alpha}$, which in turn implies that $T_\alpha^j(x)\sim\left( \frac1{2nC_1}\right)^{\frac1{1-\alpha}}$. Now, observe that 
$$
u_n^{-1}(X_{j+1})\sim 2nC_1\left(\left( \frac1{2nC_1}\right)^{\frac1{1-\alpha}}+2^\alpha \left( \frac1{2nC_1}\right)^{\frac{1+\alpha}{1-\alpha}}\right)^{1-\alpha}\sim 1+(1-\alpha)2^\alpha\left( \frac1{2nC_1}\right)^{\frac{\alpha+\alpha^2}{1-\alpha}}\sim 1.
$$ 
Similarly, we obtain that $u_n^{-1}(X_{j+2})\sim1$ and so on. Therefore, do not only the points of the same cluster get vertically aligned but they also get horizontally aligned, \ie they collapse to a single point in $N^{(2)}$. So these clusters collapse to one point. On the other hand the appearance of a cluster becomes less and less frequent since the mass concentrated at each cluster (which collapses to one point in the limit) is growing and must be compensated by a smaller and smaller frequency so that the mean of the mass in the clusters observed in $N_n^{(2)}$ below the threshold $\tau$ is approximately $\tau/2$. (Recall that the remaining $\tau/2$ correspond to the mass points associated with entrances near $\zeta_2$ for which there is no clustering). In fact, the frequency of clusters of exceedances above $u_n(\tau)$ observed in $N_n^{(2)}$ is of the order of $\frac{\mu_\alpha([x_n,y_n))}{\mu_\alpha(U_n)}\frac{\tau}2$ which becomes negligible when compared to the mean frequency $\tau/2$ corresponding to the exceedances with no clustering coming from entrances near $\zeta_2$. In the limit their asymptotic time frequency is actually $0$. Hence, in $N^{(2)}$ we only observe the contribution from the entrances near $\zeta_2$. This explains the loss of half of the mass. 

\subsection{The dynamical counterexample with a periodic point mixed with the indifferent fixed point}
For the observable $\varphi$ given in \eqref{def:observable2}, the limiting bi-dimensional process process can be written as:
$$
N^{(2)}=\sum_{i,j=1}^\infty\sum_{\ell=0}^{\infty} \delta_{(T_{i,j},\,\gamma^\ell\cdot U_{i,j})},
$$
where $T_{i,j}$ and $U_{i,j}$ are as above with $\theta=\frac12(1-\gamma^{-1})$. Recall that $T_{i,j}$ are defined as sums of the waiting times $W_{i,j}$ which follow an $\text{Exp}(\theta)$ distribution. In this case, we have two types of clustering of exceedances observed in $N_n^{(2)}$, namely the ones corresponding to entrances in $U_n$ near $\zeta_1=0$ and entrances near the periodic point $\zeta_2$. As in the previous case, the first type of clusters collapse to one point and since their asymptotic frequency is $0$, the limiting process $N^{(2)}$ does not show any sign of their appearance. In fact, in $N^{(2)}$ one can only detect the presence of the second type of clusters, which are identical to the ones described in the periodic case example, except for the fact that their asymptotic frequency is half of what one would see in the periodic case.

\bibliographystyle{abbrv}

\bibliography{CounterExample}

\end{document}